%
\documentclass[12pt, reqno]{amsart}
\usepackage{amsmath, amsthm, amscd, amsfonts, amssymb, graphicx, color, mathrsfs}
\usepackage[bookmarksnumbered, colorlinks, plainpages]{hyperref}

\textheight 22.5truecm \textwidth 14.5truecm
\setlength{\oddsidemargin}{0.35in}\setlength{\evensidemargin}{0.35in}

\setlength{\topmargin}{-.5cm}

\newtheorem{theorem}{Theorem}[section]
\newtheorem{lemma}[theorem]{Lemma}
\newtheorem{proposition}[theorem]{Proposition}
\newtheorem{corollary}[theorem]{Corollary}
\theoremstyle{definition}
\newtheorem{definition}[theorem]{Definition}

\theoremstyle{remark}
\newtheorem{remark}[theorem]{Remark}
\numberwithin{equation}{section}

\begin{document}
\setcounter{page}{1}

\title[Boundedness of FSOs ]{$L^p$-bounds for  Fourier integral operators on the torus}

\author[D. Cardona]{Duv\'an Cardona}
\address{
  Duv\'an Cardona:
  \endgraf
  Department of Mathematics: Analysis, Logic and Discrete Mathematics 
  \endgraf
  Ghent University
  \endgraf
  Ghent
  \endgraf
  Belgium
  \endgraf
  {\it E-mail address} {\rm duvanc306@gmail.com}
  }
\author[R. Messiouene]{ Rekia Messiouene}
\address{
  Rekia Messiouene:
  \endgraf
  Laboratoire de Math\'ematiques Fondamentales et Appliqu\'ees d'Oran (LMFAO). 
  \endgraf
 Universit\'e Oran1. 
  \endgraf
   B.P. 1524 El M'naouar,
  \endgraf
  Oran
  \endgraf
  Algeria
  \endgraf
  {\it E-mail address} {\rm rekiamessiouene@yahoo.fr}
  }
\author[A. Senoussaoui]{  Abderrahmane Senoussaoui}
\address{
  Abderrahmane Senoussaoui:
  \endgraf
  Laboratoire de Math\'ematiques Fondamentales et Appliqu\'ees d'Oran (LMFAO). 
  \endgraf
 Universit\'e Oran1. 
  \endgraf
   B.P. 1524 El M'naouar,
  \endgraf
  Oran
  \endgraf
  Algeria
  \endgraf
  {\it E-mail address} {\rm  senoussaoui$\_$abdou@yahoo.fr }
  }

\dedicatory{Dedicated to the $47^{th}$ birthday of  Michael Ruzhansky}

\subjclass[2010]{Primary 58J40; Secondary 35S05, 42B05.}

\keywords{ $L^p$-spaces, pseudo-differential operators, torus, Fourier integral operators, global analysis.}

\begin{abstract} In this paper we investigate the mapping properties of periodic Fourier integral operators in $L^p(\mathbb{T}^n)$-spaces. The operators considered are associated to periodic symbols (with limited regularity) in the sense of Ruzhansky and Turunen. 

\end{abstract} \maketitle

\tableofcontents
\section{Introduction}
In this paper we investigate the $L^p$-boundedeness  of periodic Fourier integral operators (also called Fourier series operators). Let us consider the $n$-dimensional torus, $\mathbb{T}^n:=\mathbb{R}^n/\mathbb{Z}^n ,$ and let us  choose a suitable function $a:\mathbb{T}^n\times \mathbb{Z}^n\rightarrow \mathbb{C}.$ Then, the periodic Fourier integral operator associated to the so-called symbol $a$  is formally defined by the   series
\begin{equation}
Af(x):=\sum_{\xi\in\mathbb{Z}^n}e^{2\pi i\phi(x,\xi)}a(x,\xi)(\mathscr{F}_{{\mathbb{T}^n}}f)(\xi),\,\,f\in C^{\infty}(\mathbb{T}^n).\footnote{ For $G=\mathbb{T}^n$ or $G=\mathbb{R}^n,$  we denote by $\mathscr{F}_{G}f$ the Fourier transform of $f\in L^1(G),$ defined by
$
    \mathscr{F}_Gf(\xi)=\int_{G}e^{-i2\pi x\cdot \xi}f(x)dx,\,\,\xi\in \widehat{G},
$ where $\widehat{\mathbb{T}}^n=\mathbb{Z}^n$ and  $\widehat{\mathbb{R}}^n=\mathbb{R}^n$.}
\end{equation}
These operators where introduced by M. Ruzhansky and V. Turunen in \cite[Chapter 4]{Ruz} and they appear in solutions of  hyperbolic differential equations with periodic conditions (see \cite[Pag. 410]{Ruz}). In this paper we give conditions on the symbol $a$  and on the phase function $\phi$ are so that the operator $A$ extends to a bounded operator on $L^p(\mathbb{T}^n).$ 

Periodic Fourier integral operators are analogues on the torus of Fourier integral operators (FIOs) on $\mathbb{R}^n,$ which have the form
\begin{equation}
T_{a,\phi}f(x):=\int_{\mathbb{R}^n}e^{2\pi i\phi(x,\xi)}a(x,\xi)(\mathscr{F}_{\mathbb{R}^n }f)(\xi)d\xi,
\end{equation}
where $\mathscr{F}_{\mathbb{R}^n }f$ is the Fourier transform of  $f,$ or more generally of FIO formally defined by
\begin{equation}
T_{a,\phi}f(x)=\int_{\mathbb{R}^{2n}}e^{2\pi i\phi(x,\xi)-2\pi i y\cdot\xi}a(x,y,\xi)f(y)dyd\xi.
\end{equation}
FIOs are used to 
express solutions to Cauchy problems of hyperbolic equations and   transform operators or equations to other  simpler ones according to Egorov's theorem (see H\"ormander \cite{Hor2}). The problem of finding mapping properties of FIOs on $L^p$-spaces have been extensively investigated. The case where the phase function is given by $\phi(x,\xi)=x\cdot \xi$ and the symbol  $a(x,y,\xi)=a(x,\xi)$ is considered in the variables $(x,\xi)$ reduces the problem to pseudo-differential operators and Fourier multipliers \cite{Hor1,Hor2,JKohnLNirenberg}. In this case  symbol inequalities of the type
\begin{equation}
|\partial_x^\beta\partial^\alpha_\xi a(x,\xi)|\leq C_{\alpha,\beta}(1+|\xi|)^{-m_p-\rho|\alpha|+\delta|\beta|},\,\,m_p=n(1-\rho)\left|\frac{1}{p}-\frac{1}{2}\right|,\,0\leq \delta<\rho\leq 1,
\end{equation}
are sufficient conditions for the $L^p$-boundedness. The historical development of the boundedness properties for pseudo-differential operators on $L^p$-spaces can be found in Wang \cite{LW}. 

In the case of general phases, according to the theory of FIOs developed by H\"ormander \cite{Hor71}, the phase functions $\phi$  are positively homogeneous
of order 1  and smooth at $\xi\neq 0,$  and the  symbols satisfy estimates of the form
\begin{equation}
\sup_{(x,y)\in K}|\partial_x^\beta\partial^\alpha_\xi a(x,y,\xi)|\leq C_{\alpha,\beta,K}(1+|\xi|)^{\kappa-|\alpha|}
\end{equation}
for every compact subset $K$ of $\mathbb{R}^{2n}.$ So, as it was pointed out in Ruzhansky and Wirth \cite{RuzSugi}, $L^p$-properties of FIO can be summarized as follows.
\begin{itemize}
\item If $\kappa\leq 0,$ then $T$ is $(L^2_{\textnormal{comp}},L^2_{\textnormal{loc}})$-bounded (H\"ormander\cite{Hor71} and Eskin\cite{Eskin}).

\item If $\kappa\leq\kappa_p:= -(n-1)\left|\frac{1}{p}-\frac{1}{2}\right|,$ then $T$ is $(L^p_{\textnormal{comp}},L^p_{\textnormal{loc}})$-bounded (Seeger, Sogge and Stein\cite{SSS91}).

\item If $\kappa\leq - \frac{1}{2}(n-1),$ then $T$ is $(H^1_{\textnormal{comp}},L^1_{\textnormal{loc}})$-bounded (Seeger, Sogge and Stein\cite{SSS91}).

\item If $\kappa\leq- \frac{1}{2}(n-1),$ then $T$ is locally weak $(1,1)$ type (Terence Tao\cite{Tao}).

\item Other conditions can be found in Miyachi \cite{Miyachi}, Peral\cite{Peral}, Asada and Fujiwara\cite{AF}, Fujiwara\cite{Fuji}, Kumano-go\cite{Kumano-go}, Coriasco and Ruzhansky \cite{CoRu1,CoRu2}, Ruzhansky and Sugimoto\cite{RuzSugi01,RuzSugi02,RuzSugi03,RuzSugi}, Beltran, Hickman and Sogge \cite{BeltranHickmanSogge2018} and Ruzhansky \cite{M. Ruzhansky}. The boundedness and the compactness on $L^2$ for a class of these operators but with a general class of symbols can be found in \cite{SENOUS1} and \cite{SENOUS2}. However the boundedness on $L^2 $ and $L^p $ of these classes of operators but with semi classical parameter can be found on \cite{Elo} and \cite{Har}.

\item A periodic version for the $L^2$-result by H\"ormander and Eskin mentioned above, was proved by Ruzhansky and Turunen (see Theorem \ref{TeoremaRT}). See also dispersive estimates for FIOs in Ruzhansky and Wirth \cite{RWirth}.
\end{itemize}

Our results are proved in the framework of   periodic  operators on the torus $\mathbb{T}^n.$  The subject was developed by several authors over decades (see Agranovich\cite{ag}, McLean\cite{Mc}, Turunen and Vainikko\cite{tur}, Ruzhansky and Turunen\cite{Ruz-2}) and generalized to arbitrary compact Lie groups in the fundamental book \cite{Ruz} by Ruzhansky and Turunen. The first step in the analysis on the torus is the definition of periodic pseudo-differential operators, which are linear operators of the form
\begin{equation}
a(x,D)f(x):=\sum_{\xi\in\mathbb{Z}^n}e^{i2\pi x\cdot \xi}a(x,\xi)(\mathscr{F}_{{\mathbb{T}^n}}f)(\xi).
\end{equation}
The calculus of pseudo-differential operators has been treated, e.g., in \cite{Ruz-2,Ruz} and its mapping properties on $L^p(\mathbb{T}^n)$-spaces can be found in the works of the first author \cite{Duvan2,Duvan3,Duvan4,Duvan5}, in Delgado\cite{Profe}, Molahajloo and Wong\cite{s1,s2,m} and Cardona and Kumar \cite{KumarCardona,KumarCardona2}. It is important to mention that  $L^p$-estimates for pseudo-differential operators on compact Lie groups (e.g., the $n$-dimensional torus $\mathbb{T}^n$ or the compact Lie groups $\textrm{SU}(2),\textnormal{SO}(3),$ etc.) can be found in Delgado and Ruzhansky \cite{DR4}. 
The main results in this paper are Theorems \ref{theorem 1.1 } and \ref{Teorema1Intro} below. In Theorem \ref{theorem 1.1 } we investigate how the $L^p$-boundedness of FIOs implies the $L^p$-boundedness of FSOs.
\begin{theorem}\label{theorem 1.1 }
Let $1< p<\infty.$ Let us assume that $\phi$ is a real valued continuous function defined on $\mathbb{T}^n\times\mathbb{R}^n.$ If  $a:\mathbb{T}^n\times\mathbb{R}^n\rightarrow \mathbb{C}$ is a continuous bounded function and the Fourier integral operator
\begin{equation}
T_{\phi,a}f(x):=\int_{\mathbb{R}^n}e^{2\pi i\phi(x,\xi)}a(x,\xi)(\mathscr{F}_{\mathbb{R}^n}f)(\xi)d\xi
\end{equation} extends to a bounded operator  $T_{\phi,a}:L^p(\mathbb{R}^n)\rightarrow L^p(\mathbb{R}^n),$ then the periodic Fourier integral operator
\begin{equation}\label{cp}
A_{\phi,a}f(x):=\sum_{\xi\in\mathbb{Z}^n}e^{2\pi i\phi(x,\xi)}a(x,\xi)(\mathscr{F}_{{\mathbb{T}^n}}f)(\xi)
\end{equation} also extends to a bounded operator $A_{\phi,a}:L^p(\mathbb{T}^n)\rightarrow L^p(\mathbb{T}^n).$ Moreover, for  some  $C_p>0,$ the estimate $\Vert A_{\phi,a} \Vert_{\mathscr{B}(L^p(\mathbb{T}^n))}\leq C_p\Vert T_{\phi,a}\Vert_{\mathscr{B}(L^p(\mathbb{R}^n))}$ holds true.
\end{theorem}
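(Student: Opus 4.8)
The argument is a de Leeuw-type transference. Since trigonometric polynomials are dense in $L^p(\mathbb{T}^n)$ for $1<p<\infty$ and the series defining $A_{\phi,a}$ terminates on them, it suffices to establish $\|A_{\phi,a}f\|_{L^p(\mathbb{T}^n)}\le C_p\|T_{\phi,a}\|_{\mathscr{B}(L^p(\mathbb{R}^n))}\|f\|_{L^p(\mathbb{T}^n)}$ for every trigonometric polynomial $f(x)=\sum_{\xi}\widehat f(\xi)e^{2\pi ix\cdot\xi}$ (finite sum). Fix once and for all a Schwartz function $\chi$ on $\mathbb{R}^n$ with $\chi(0)=1$, e.g. $\chi(x)=e^{-\pi|x|^2}$, so that $\widehat\chi$ is again a Gaussian, $\widehat\chi\in L^1(\mathbb{R}^n)$ and $\int_{\mathbb{R}^n}\widehat\chi(\omega)\,d\omega=\chi(0)=1$. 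For a positive integer $N$ put $\chi_N(x):=\chi(x/N)$ and $F_N:=\chi_N f$; this is a Schwartz function, hence $T_{\phi,a}F_N$ is given by the absolutely convergent defining integral and $\|T_{\phi,a}F_N\|_{L^p(\mathbb{R}^n)}\le\|T_{\phi,a}\|_{\mathscr{B}(L^p(\mathbb{R}^n))}\|F_N\|_{L^p(\mathbb{R}^n)}$. Since $\widehat{F_N}(\eta)=\sum_{\xi}\widehat f(\xi)\,N^n\widehat\chi(N(\eta-\xi))$, the substitution $\eta=\xi+\omega/N$ gives
\begin{equation*}
T_{\phi,a}F_N(x)=\sum_{\xi}\widehat f(\xi)\int_{\mathbb{R}^n}e^{2\pi i\phi(x,\xi+\omega/N)}a(x,\xi+\omega/N)\,\widehat\chi(\omega)\,d\omega .
\end{equation*}

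The first step is to prove $T_{\phi,a}F_N\to A_{\phi,a}f$ uniformly on $\mathbb{R}^n$ as $N\to\infty$. The integrand above is dominated by $\|a\|_{\infty}|\widehat\chi(\omega)|\in L^1(\mathbb{R}^n)$; splitting the integral into $\{|\omega|\le M\}$ and $\{|\omega|>M\}$, using for the first region the uniform continuity of $\phi$ and $a$ on compact subsets of $\mathbb{T}^n\times\mathbb{R}^n$ (here it is essential that $\phi$ and $a$ are $\mathbb{Z}^n$-periodic in $x$, so that $x$ ranges over the compact $\mathbb{T}^n$) and for the second region the smallness of the $L^1$-tail of $\widehat\chi$, one gets that the inner integral converges to $e^{2\pi i\phi(x,\xi)}a(x,\xi)\int_{\mathbb{R}^n}\widehat\chi=e^{2\pi i\phi(x,\xi)}a(x,\xi)$ uniformly in $x$. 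Since the sum over $\xi$ is finite, this yields
\begin{equation*}
\varepsilon_N:=\sup_{x\in\mathbb{R}^n}\bigl|T_{\phi,a}F_N(x)-A_{\phi,a}f(x)\bigr|\longrightarrow 0 ;
\end{equation*}
moreover $A_{\phi,a}f$ and all the $T_{\phi,a}F_N$ are uniformly bounded.

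The second step compares $L^p$-norms using periodicity. On one side, the substitution $x=Ny$ together with the equidistribution principle $\int_{\mathbb{R}^n}h(y)g(Ny)\,dy\to\bigl(\int_{\mathbb{R}^n}h\bigr)\bigl(\int_{[0,1)^n}g\bigr)$, valid for $h\in L^1(\mathbb{R}^n)$ and $g$ bounded and $\mathbb{Z}^n$-periodic, applied with $h=|\chi|^p$ and $g=|f|^p$, gives $\|F_N\|_{L^p(\mathbb{R}^n)}^p=N^n\bigl(\|\chi\|_{L^p(\mathbb{R}^n)}^p\|f\|_{L^p(\mathbb{T}^n)}^p+o(1)\bigr)$. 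On the other side, since $|T_{\phi,a}F_N|\ge(|A_{\phi,a}f|-\varepsilon_N)_+$ pointwise (by the definition of $\varepsilon_N$), while $[0,N)^n$ is a disjoint union of $N^n$ translates of $[0,1)^n$ and $(|A_{\phi,a}f|-\varepsilon_N)_+$ is $\mathbb{Z}^n$-periodic,
\begin{equation*}
\|T_{\phi,a}F_N\|_{L^p(\mathbb{R}^n)}^p\ \ge\ \int_{[0,N)^n}\bigl(|A_{\phi,a}f(x)|-\varepsilon_N\bigr)_+^p\,dx\ =\ N^n\,\bigl\|(|A_{\phi,a}f|-\varepsilon_N)_+\bigr\|_{L^p(\mathbb{T}^n)}^p .
\end{equation*}
Combining these with $\|T_{\phi,a}F_N\|_{L^p(\mathbb{R}^n)}\le\|T_{\phi,a}\|_{\mathscr{B}(L^p(\mathbb{R}^n))}\|F_N\|_{L^p(\mathbb{R}^n)}$, dividing by $N^n$, and letting $N\to\infty$ (so $\varepsilon_N\to0$ and the $o(1)$ disappears; the left-hand side passes to the limit by dominated convergence on the compact $\mathbb{T}^n$) gives $\|A_{\phi,a}f\|_{L^p(\mathbb{T}^n)}\le\|\chi\|_{L^p(\mathbb{R}^n)}\|T_{\phi,a}\|_{\mathscr{B}(L^p(\mathbb{R}^n))}\|f\|_{L^p(\mathbb{T}^n)}$. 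By density this is the assertion, with $C_p=\|\chi\|_{L^p(\mathbb{R}^n)}$.

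The changes of variable, the identification of $T_{\phi,a}F_N$ with the defining integral on Schwartz inputs, and the final density argument are routine. The two delicate points are: (i) the \emph{uniform} convergence $T_{\phi,a}F_N\to A_{\phi,a}f$ over all of $\mathbb{R}^n$, which is exactly what the $\mathbb{Z}^n$-periodicity of $\phi$ and $a$ in $x$ buys and which is needed so that the integration region on the right may be taken to be the growing cube $[0,N)^n$; and (ii) bounding $\|F_N\|_{L^p(\mathbb{R}^n)}$ with the correct dependence on $\|f\|_{L^p(\mathbb{T}^n)}$ (and not merely on $\|f\|_{L^\infty(\mathbb{T}^n)}$), which is where the equidistribution lemma enters. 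I expect (i) to be the main obstacle to carry out cleanly.
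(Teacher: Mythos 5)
Your proof is correct, but it follows a genuinely different route from the paper's. The paper reproduces the Stein--Weiss transference scheme for Fourier multipliers (Theorem 3.8 of \cite{SteWei}): for trigonometric polynomials $P,Q$ it proves the limit identity
\begin{equation*}
\lim_{\varepsilon\to 0}\varepsilon^{n/2}\int_{\mathbb{R}^n}[T_{\phi,a}(Pw_{\varepsilon\alpha})](x)\,\overline{Q(x)}\,w_{\varepsilon\beta}(x)\,dx=(\pi/\beta)^{n/2}\int_{\mathbb{T}^n}(A_{\phi,a}P)(x)\overline{Q(x)}\,dx,
\end{equation*}
with $w_\delta(x)=e^{-\delta|x|^2}$ and $\alpha=1/p$, $\beta=1/p'$, the key analytic input being an extension of Stein--Weiss Lemma 3.9 to products $fg_m$ with $g_m$ uniformly bounded and pointwise convergent (the paper's Lemma \ref{lemma1}); the norm bound then follows from H\"older's inequality and the dual characterization $\Vert A_{\phi,a}P\Vert_{L^p(\mathbb{T}^n)}=\sup_{\Vert Q\Vert_{L^{p'}}=1}\vert\int_{\mathbb{T}^n}A_{\phi,a}P\,\overline{Q}\,\vert$. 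You avoid duality entirely: you damp $f$ by the dilated Gaussian $\chi(\cdot/N)$, prove uniform convergence $T_{\phi,a}F_N\to A_{\phi,a}f$ on all of $\mathbb{R}^n$ (which is legitimate precisely because $\phi$ and $a$ are $1$-periodic in $x$, hence uniformly continuous on $\mathbb{T}^n\times K$ for compact $K$), and then compare $L^p$-norms directly, via cube counting on $[0,N)^n$ for the lower bound and the equidistribution lemma for $\Vert F_N\Vert_{L^p(\mathbb{R}^n)}$. The paper's route only requires convergence of weighted pairings (no uniform convergence over $\mathbb{R}^n$) and runs verbatim parallel to the classical multiplier case; your route yields an explicit constant $C_p=\Vert\chi\Vert_{L^p(\mathbb{R}^n)}$, makes no use of the dual exponent (so it would even cover $p=1$), and needs no auxiliary test polynomial $Q$. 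Both arguments share the same implicit assumption that the bounded extension of $T_{\phi,a}$ agrees with the defining integral on Gaussian-damped trigonometric polynomials, and both conclude by density of trigonometric polynomials in $L^p(\mathbb{T}^n)$.
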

In the next result we establish the boundedness of FSOs.  Recall that the operator $\Delta_\xi$ is the usual difference operator acting on sequences, see Definition \ref{diferenceoperator}.
\begin{theorem}\label{Teorema1Intro} 
Let us assume that $\phi:\mathbb{T}^n\times \mathbb{R}^n\rightarrow\mathbb{R}$ is a   real-valued phase function positively homogeneous of order $1$ in $\xi\neq 0$. 
 Let us assume that $\partial_{x}^{\gamma'}\partial_{\xi}^\gamma\phi\in S^0_{0,0}(\mathbb{T}^n\times (\mathbb{R}^n\setminus \{0\}))$ when $|\gamma|=|\gamma'|=1,$ that
\begin{equation}
    |\textnormal{det}(\partial_{y}\partial_{\xi}\phi(y,\xi))|\geq C>0,\,\,\,|\partial^\alpha_{y}\phi(y,\xi)|\leq C_{\alpha}|\xi|,\,\,\xi\neq 0,
\end{equation}
\begin{equation}
    \langle \nabla_{\xi}\phi(y,\xi)\rangle\asymp  1,\,\,\,\langle \nabla_{y}\phi(y,\xi)\rangle\asymp \langle \xi\rangle, 
\end{equation} 
and the symbol inequalities
\begin{equation}
|\partial_x^\beta\Delta_\xi^\alpha a(x,\xi)|\leq C_{\alpha,\beta}\langle \xi\rangle^{\kappa-|\alpha|},\,\,\kappa\leq\kappa_p:=-(n-1)\left|\frac{1}{p}-\frac{1}{2}\right|,\,\,|\beta|\leq \left[\frac{n}{p}\right]+1,
\end{equation} hold true for every $(x,\xi)\in \mathbb{T}^n\times \mathbb{Z}^n$. Then, the periodic Fourier integral operator in \eqref{cp} extends to a bounded operator  $A_{\phi,a}:L^p(\mathbb{T}^n)\rightarrow L^p(\mathbb{T}^n)$  for all  $1<p<\infty.$
\end{theorem}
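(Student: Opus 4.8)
The plan is to deduce Theorem \ref{Teorema1Intro} from Theorem \ref{theorem 1.1 } by constructing a Fourier integral operator on $\mathbb{R}^n$ that is bounded on $L^p(\mathbb{R}^n)$ and whose restriction to $\mathbb{Z}^n$ in the frequency variable reproduces the given periodic symbol $a$ and phase $\phi$. Concretely, one first builds a smooth phase $\widetilde{\phi}$ on $\mathbb{R}^n\times\mathbb{R}^n$ that agrees with $\phi$ (suitably modified near $\xi=0$, using the positive homogeneity to cut off the origin harmlessly) and inherits the same homogeneity, non-degeneracy of $\partial_y\partial_\xi\widetilde\phi$, and the bounds $\langle\nabla_\xi\widetilde\phi\rangle\asymp 1$, $\langle\nabla_y\widetilde\phi\rangle\asymp\langle\xi\rangle$, $|\partial_y^\alpha\widetilde\phi|\le C_\alpha|\xi|$; periodicity in $y$ is automatic if we extend $\phi(\cdot,\xi)$ periodically. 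Then one produces a classical H\"ormander-type symbol $\widetilde a\in S^\kappa_{1,0}(\mathbb{R}^n\times\mathbb{R}^n)$ of order $\kappa\le\kappa_p$ that interpolates the discrete data: the discrete symbol estimates $|\partial_x^\beta\Delta_\xi^\alpha a(x,\xi)|\le C_{\alpha,\beta}\langle\xi\rangle^{\kappa-|\alpha|}$ are precisely the hypotheses that allow such an interpolation (this is the toroidal/Euclidean symbol correspondence of Ruzhansky and Turunen, adapted to the limited-regularity setting where only $[\,n/p\,]+1$ derivatives in $x$ are controlled). One must take care that the interpolated symbol retains periodicity in $x$, which can be arranged by convolving with a periodic partition of unity in the $x$-variable.

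Once $\widetilde a$ and $\widetilde\phi$ are in place, the hypotheses match the Seeger--Sogge--Stein framework recalled in the introduction: a symbol of order $\kappa\le\kappa_p=-(n-1)|1/p-1/2|$ with a non-degenerate homogeneous phase yields an operator $T_{\widetilde\phi,\widetilde a}$ that is $(L^p_{\mathrm{comp}},L^p_{\mathrm{loc}})$-bounded, and because both $\widetilde a$ and $\widetilde\phi$ are periodic in $x$ (and $\widetilde a$ is globally controlled in $\xi$ by the $S^\kappa_{1,0}$ estimates, with the extra condition $\langle\nabla_y\widetilde\phi\rangle\asymp\langle\xi\rangle$ preventing the phase from degenerating at infinity), one upgrades the local boundedness to genuine boundedness $T_{\widetilde\phi,\widetilde a}:L^p(\mathbb{R}^n)\to L^p(\mathbb{R}^n)$. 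Here one may invoke the global $L^p$ results for FIOs with this type of phase and symbol (e.g. the Ruzhansky--Sugimoto global calculus cited in the excerpt) rather than re-proving the $L^p$ estimate from scratch. Finally, Theorem \ref{theorem 1.1 } applies verbatim to $T_{\widetilde\phi,\widetilde a}$ and $A_{\widetilde\phi,\widetilde a}$; since $\widetilde\phi$ restricted to $\mathbb{T}^n\times\mathbb{Z}^n$ equals $\phi$ and $\widetilde a$ restricted to $\mathbb{T}^n\times\mathbb{Z}^n$ equals $a$ (up to a smoothing remainder that is itself a periodic pseudo-differential operator of order $\kappa\le\kappa_p\le 0$, hence bounded on $L^p(\mathbb{T}^n)$ by the known toroidal calculus), we conclude that $A_{\phi,a}$ is bounded on $L^p(\mathbb{T}^n)$.

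The main obstacle I expect is the construction of the extended smooth symbol $\widetilde a$ on $\mathbb{R}^n\times\mathbb{R}^n$ from the discrete data with only finitely many $x$-derivatives controlled: one needs the extension to lie in $S^\kappa_{1,0}$ in the $\xi$-variable (so that SSS applies) while being merely $C^{[n/p]+1}$ in $x$, and one must verify that this limited $x$-regularity suffices for the $L^p$ bound — this is where a Sobolev-embedding argument in the $x$-variable (with $[n/p]+1$ derivatives being the natural threshold, exactly as in the Sobolev embedding $W^{[n/p]+1,p}\hookrightarrow L^\infty$-type estimates on the torus) enters, and where the precise hypotheses on $\phi$ involving $\partial_x^{\gamma'}\partial_\xi^\gamma\phi\in S^0_{0,0}$ for $|\gamma|=|\gamma'|=1$ are used to keep the phase-derivative contributions from destroying the symbol class. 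A secondary technical point is handling the cutoff near $\xi=0$: the homogeneous phase is singular there, but on $\mathbb{Z}^n$ only $\xi=0$ is affected, contributing a single bounded rank-one term, so this causes no real difficulty and can be absorbed into the remainder.
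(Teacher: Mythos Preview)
Your proposal is correct and uses the same three ingredients as the paper: (i) extend the discrete symbol to a Euclidean one via the Ruzhansky--Turunen correspondence (Corollary~\ref{IClases}), (ii) invoke a global $L^p$ result for FIOs on $\mathbb{R}^n$, and (iii) transfer to the torus via Theorem~\ref{theorem 1.1 }. The organizational difference is that the paper decouples the $x$-dependence from the $\xi$-extension: it first treats symbols $a=a(\xi)$ depending only on the frequency variable, extends in $\xi$ alone, applies the Coriasco--Ruzhansky global $L^p$ theorem (Theorem~\ref{CorRuzhansky}, whose phase hypotheses are exactly the ones in the statement), and then transfers via Theorem~\ref{theorem 1.1 }. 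Only afterwards does it handle the $x$-dependence, by freezing $x\mapsto z$ and applying a separate Sobolev-embedding lemma on the torus (Theorem~\ref{extension}) which needs precisely $[n/p]+1$ derivatives in $z$. This sidesteps the obstacle you flag: one never needs a limited-$x$-regularity version of the Euclidean FIO bound, because the Euclidean step is run only on $x$-independent symbols, and the Sobolev embedding happens entirely on $\mathbb{T}^n$ after the transfer. Your route---extending in both variables at once and then arguing that $C^{[n/p]+1}$ regularity in $x$ suffices for the global $\mathbb{R}^n$ bound---also works, but forces you to revisit the Euclidean $L^p$ proof; the paper's decoupling is cleaner and lets you quote the Euclidean result as a black box. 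A minor point: the relevant black box is Coriasco--Ruzhansky rather than Seeger--Sogge--Stein, since the latter is local and the phase hypotheses $\langle\nabla_\xi\phi\rangle\asymp 1$, $\langle\nabla_y\phi\rangle\asymp\langle\xi\rangle$ are tailored to the former.
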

Theorem  \ref{Teorema1Intro} gives the $L^2$-boundedness of FSOs provided that $\kappa\leq 0.$ Our conditions however are different from the following sharp $L^2$-result due to  Ruzhansky and  Turunen.
\begin{theorem}[Ruzhansky-Turunen]\label{TeoremaRT} 
Let us assume that $\phi:\mathbb{T}^n\times \mathbb{Z}^n\rightarrow\mathbb{R}$ is a   real-valued phase function,  homogeneous of order $1$ in $\xi\neq 0$. Let us assume that $\Delta^\gamma_\xi\phi\in S^0_{0,0,2n+1,0}(\mathbb{T}^n\times \mathbb{R}^n)$ when $|\gamma|=1$ and the symbol  inequalities
\begin{equation}
|\partial_x^\beta a(x,\xi)|\leq C,\,\,\,|\beta|\leq 2n+1,
\end{equation} hold true. Assume also that
\begin{equation}
|\nabla_x\phi(x,\xi)-\nabla_x\phi(x,\xi')|\geq C|\xi-\xi'|,\,\xi,\xi'\in\mathbb{Z}^n.
\end{equation}
Then, the periodic Fourier integral operator in \eqref{cp} extends to a bounded operator $A_{\phi,a}:L^2(\mathbb{T}^n)\rightarrow L^2(\mathbb{T}^n)$.
\end{theorem}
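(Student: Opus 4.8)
The plan is to prove this by the classical $T^{*}T$ method transferred to the torus: I would first reduce the $L^{2}(\mathbb{T}^n)$-boundedness of $A_{\phi,a}$ to the $\ell^{2}(\mathbb{Z}^n)$-boundedness of an explicit Hermitian matrix, and then establish the latter by a non-stationary phase estimate together with the Schur test. For $f\in C^{\infty}(\mathbb{T}^n)$ write $\hat f=\mathscr{F}_{\mathbb{T}^n}f$. Expanding $\|A_{\phi,a}f\|_{L^{2}(\mathbb{T}^n)}^{2}$ and integrating in $x$ over $\mathbb{T}^n$ one obtains the quadratic form
\begin{equation*}
\|A_{\phi,a}f\|_{L^{2}(\mathbb{T}^n)}^{2}=\sum_{\xi,\eta\in\mathbb{Z}^n}M(\xi,\eta)\,\hat f(\xi)\,\overline{\hat f(\eta)},\qquad M(\xi,\eta):=\int_{\mathbb{T}^n}e^{2\pi i(\phi(x,\xi)-\phi(x,\eta))}a(x,\xi)\overline{a(x,\eta)}\,dx.
\end{equation*}
Since $M$ is Hermitian, namely $\overline{M(\xi,\eta)}=M(\eta,\xi)$, by Plancherel $\|A_{\phi,a}\|_{\mathscr B(L^{2}(\mathbb{T}^n))}^{2}$ equals the operator norm of $M$ on $\ell^{2}(\mathbb{Z}^n)$, so everything reduces to bounding the Gram matrix $M$ on $\ell^{2}(\mathbb{Z}^n)$.

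Next I would extract off-diagonal decay of $M(\xi,\eta)$ from the separation hypothesis on $\nabla_x\phi$. Fix $\xi\neq\eta$ and set $\psi(x):=\phi(x,\xi)-\phi(x,\eta)$ and $b(x):=a(x,\xi)\overline{a(x,\eta)}$. The hypothesis gives the non-stationarity $|\nabla_x\psi(x)|=|\nabla_x\phi(x,\xi)-\nabla_x\phi(x,\eta)|\geq C|\xi-\eta|$, so introducing the first-order operator $L:=(2\pi i)^{-1}|\nabla_x\psi|^{-2}\,\nabla_x\psi\cdot\nabla_x$, which fixes $e^{2\pi i\psi}$, and integrating by parts $N:=2n+1$ times (there are no boundary terms on $\mathbb{T}^n$) yields
\begin{equation*}
M(\xi,\eta)=\int_{\mathbb{T}^n}e^{2\pi i\psi(x)}\,(L^{t})^{N}b(x)\,dx,
\end{equation*}
where $L^{t}$ denotes the formal transpose of $L$.

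The heart of the argument is to show that each application of $L^{t}$ gains a genuine factor $|\xi-\eta|^{-1}$. By telescoping along a lattice path from $\eta$ to $\xi$ one writes $\psi$ as a sum of $\lesssim|\xi-\eta|$ first-differences of $\phi$, so the hypothesis $\Delta_\xi\phi\in S^0_{0,0,2n+1,0}$ forces $|\partial_x^\beta\psi(x)|\leq C_\beta|\xi-\eta|$ for $1\leq|\beta|\leq 2n+1$, uniformly in $x$, while the symbol bounds on $a$ give $|\partial_x^\beta b(x)|\leq C_\beta$ for $|\beta|\leq 2n+1$ by the Leibniz rule. Each $L^{t}$ either differentiates $b$, keeping it bounded, or differentiates the coefficient $|\nabla_x\psi|^{-2}\nabla_x\psi$; combining $|\nabla_x\psi|\geq C|\xi-\eta|$ with $|\partial_x^\beta\psi|\leq C_\beta|\xi-\eta|$, a quotient-rule computation shows that every such derivative still contributes a net $|\xi-\eta|^{-1}$, because the homogeneity makes the growth $|\partial_x^\beta\psi|\sim|\xi-\eta|$ cancel exactly against the powers of $|\nabla_x\psi|^{-1}\sim|\xi-\eta|^{-1}$. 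This bookkeeping, ensuring that exactly the $2n+1$ controlled $x$-derivatives of $a$ and of $\Delta_\xi\phi$ are consumed and that no uncompensated power of $|\xi-\eta|$ survives, is the main obstacle, and it is precisely why $2n+1$ derivatives are demanded. After $N=2n+1$ steps this gives
\begin{equation*}
|M(\xi,\eta)|\leq C_N\,\langle\xi-\eta\rangle^{-(2n+1)},\qquad\xi\neq\eta,
\end{equation*}
while trivially $|M(\xi,\xi)|\leq\sup_{x}|a(x,\xi)|^{2}\leq C$.

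Finally, since $2n+1>n$ the kernel $\langle\zeta\rangle^{-(2n+1)}$ is summable over $\mathbb{Z}^n$, so
\begin{equation*}
\sup_{\xi\in\mathbb{Z}^n}\sum_{\eta\in\mathbb{Z}^n}|M(\xi,\eta)|+\sup_{\eta\in\mathbb{Z}^n}\sum_{\xi\in\mathbb{Z}^n}|M(\xi,\eta)|\leq C\sum_{\zeta\in\mathbb{Z}^n}\langle\zeta\rangle^{-(2n+1)}<\infty.
\end{equation*}
The Schur test then bounds $M$ on $\ell^{2}(\mathbb{Z}^n)$, and by the reduction in the first step $A_{\phi,a}$ extends to a bounded operator on $L^{2}(\mathbb{T}^n)$, which completes the plan.
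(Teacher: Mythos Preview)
The paper does not actually prove Theorem~\ref{TeoremaRT}: it is stated in the introduction as a known result of Ruzhansky and Turunen (with reference to \cite{Ruz}) and is used only for comparison with the authors' own $L^2$ case of Theorem~\ref{Teorema1Intro}. So there is no proof in this paper to compare your proposal against.

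That said, your plan is essentially the standard argument, and it is in substance the same as the original proof in \cite[Chapter~4]{Ruz}: reduce $L^2$-boundedness to $\ell^2$-boundedness of the Gram matrix $M(\xi,\eta)=\int_{\mathbb{T}^n}e^{2\pi i(\phi(x,\xi)-\phi(x,\eta))}a(x,\xi)\overline{a(x,\eta)}\,dx$, use the lower bound on $|\nabla_x\phi(x,\xi)-\nabla_x\phi(x,\eta)|$ to integrate by parts and gain powers of $\langle\xi-\eta\rangle^{-1}$, and conclude by Schur. One small point of bookkeeping: after $N$ integrations by parts the expression $(L^t)^N b$ contains $\partial_x^{N}c$, hence derivatives of $\psi=\phi(\cdot,\xi)-\phi(\cdot,\eta)$ up to order $N+1$, not $N$. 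With $N=2n+1$ this would call for one more $x$-derivative of $\Delta_\xi\phi$ than the hypothesis supplies. This is harmless for the strategy, since any $N>n$ makes $\langle\xi-\eta\rangle^{-N}$ summable over $\mathbb{Z}^n$; simply take, say, $N=n+1$ (or $N=2n$), which the stated regularity comfortably covers, and the Schur test goes through exactly as you wrote.
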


In relation with the $L^2$-results mentioned above, we present the following dispersive estimate for a family of periodic Fourier integral operators of the form
\begin{equation}\label{cptt}
A_{t}f(x):=\sum_{\xi\in\mathbb{Z}^n}e^{i2\pi x\xi+2\pi it\phi(t,x,\xi)}a(t,x,\xi)(\mathscr{F}_{{\mathbb{T}^n}}f)(\xi),\,0<t_0\leq t<\infty.
\end{equation} The corresponding assertion is the following.
\begin{theorem} Let us consider the parametrized family of periodic Fourier integral operators 
\begin{equation}
A_{t}f(x):=\sum_{\xi\in\mathbb{Z}^n}e^{i2\pi x\xi+2\pi it\phi(t,x,\xi)}a(t,x,\xi)(\mathscr{F}_{{\mathbb{T}^n}}f)(\xi),\,0<t_0\leq t<\infty.
\end{equation} Let us assume that $\phi:[t_0,\infty)\times \mathbb{T}^n\times \mathbb{R}^n\rightarrow\mathbb{R}$ is a   real-valued phase function,  homogeneous of order $1$ in $\xi\neq 0,$ and satisfies
\begin{equation}
|\det(I+t\partial_x\partial_\xi\phi(t,x,\xi))|\geq C_0>0,\,\,|\partial_x^\beta\partial_\xi^\alpha \phi(t,x,\xi)|\leq C_{\alpha,\beta}t^{-|\beta|},\,\,t\geq t_0>0,
\end{equation} for all $x\in \mathbb{T}^n,$ $\xi\neq 0,$ and for  $1\leq |\beta|,|\alpha|\leq 2n+2.$ Let us assume that $a:[t_0,\infty)\times \mathbb{T}^n\times \mathbb{Z}^n\rightarrow\mathbb{C}$ is supported in $t|\xi|\geq C$ for some constant $C>0$ and that
\begin{equation}
|\partial_x^\beta\Delta_\xi^\alpha a(t,x,\xi)|\leq C_{\alpha,\beta}t^{-|\beta|},\,\,t\geq t_0>0,\,\,|\alpha|,|\beta|\leq 2n+2.
\end{equation} Then, the family $A_{t},\,0<t_0\leq t<\infty,$ is uniformly bounded on $L^2(\mathbb{T}^n).$ Moreover
\begin{equation}
\Vert A_t f\Vert_{L^2(\mathbb{T}^n)}\leq C\sup_{|\alpha|,|\beta|\leq 2n+2}C_{\alpha,\beta}\cdot  \Vert  f\Vert_{L^2(\mathbb{T}^n)}.
\end{equation}
\end{theorem}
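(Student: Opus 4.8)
The plan is to transfer the problem to $\mathbb{R}^n$ and combine the existing Euclidean dispersive estimates with Theorem \ref{theorem 1.1 }. Fix $t\geq t_0$. First, using the toroidal-to-Euclidean extension theorem of Ruzhansky and Turunen (\cite[Chapter 4]{Ruz}), choose a continuous bounded function $\widetilde a(t,\cdot,\cdot):\mathbb{T}^n\times\mathbb{R}^n\to\mathbb{C}$ that agrees with $a(t,\cdot,\cdot)$ on $\mathbb{T}^n\times\mathbb{Z}^n$ and for which the discrete bounds $|\partial_x^\beta\Delta_\xi^\alpha a(t,x,\xi)|\leq C_{\alpha,\beta}t^{-|\beta|}$ become the Euclidean bounds $|\partial_x^\beta\partial_\xi^\alpha\widetilde a(t,x,\xi)|\leq C'_{\alpha,\beta}t^{-|\beta|}$ for $|\alpha|,|\beta|\leq 2n+2$, with constants $C'_{\alpha,\beta}$ independent of $t$ (only the values on $\mathbb{Z}^n$ are used, so replacing $a$ by $\widetilde a$ does not change $A_t$ in \eqref{cptt}). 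In particular $\widetilde a\in S^0_{0,0}$ uniformly in $t\geq t_0$, and $\widetilde a(t,\cdot,\cdot)$ is supported in $t|\xi|\geq C$.

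Second, I would show that the Euclidean family
\begin{equation*}
T_tf(x):=\int_{\mathbb{R}^n}e^{i2\pi x\cdot\xi+2\pi it\phi(t,x,\xi)}\widetilde a(t,x,\xi)(\mathscr{F}_{\mathbb{R}^n}f)(\xi)\,d\xi
\end{equation*}
is bounded on $L^2(\mathbb{R}^n)$ uniformly in $t\geq t_0$, with $\Vert T_t\Vert_{\mathscr{B}(L^2(\mathbb{R}^n))}\leq C\sup_{|\alpha|,|\beta|\leq 2n+2}C_{\alpha,\beta}$. This is a dispersive-type $L^2$-estimate of exactly the kind established by Ruzhansky and Wirth \cite{RWirth}: the phase $\Phi_t(x,\xi)=x\cdot\xi+t\phi(t,x,\xi)$ is a perturbation of $x\cdot\xi$ whose mixed derivatives obey $|\partial_x^\beta\partial_\xi^\alpha(t\phi)|\leq C_{\alpha,\beta}t^{1-|\beta|}\leq C_{\alpha,\beta}$ for $1\leq|\alpha|,|\beta|\leq 2n+2$ and $t\geq t_0$, the non-degeneracy hypothesis reads $|\det\partial_x\partial_\xi\Phi_t|=|\det(I+t\partial_x\partial_\xi\phi)|\geq C_0>0$, and $\widetilde a$ lies in $S^0_{0,0}$ uniformly. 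Concretely, one decomposes $\xi$ into dyadic shells $|\xi|\sim 2^j$; only shells with $2^j\gtrsim t^{-1}$ contribute, because of the support condition on $\widetilde a$. On each such shell one rescales $\xi=2^j\zeta$ with $|\zeta|\asymp 1$, and by the homogeneity of $\phi$ this turns the piece into an oscillatory integral operator with large parameter $\lambda_j=t2^j$ and a non-degenerate $C^\infty$-phase whose relevant derivatives and whose non-degeneracy constant are controlled uniformly in $j$ and $t$. Hörmander's $L^2$-theorem for oscillatory integral operators then bounds each shell uniformly, and an almost-orthogonality (Cotlar--Stein) argument, using that the shells overlap with bounded multiplicity, sums the pieces; alternatively one may invoke the global $L^2$-boundedness results for Fourier integral operators with non-degenerate phases of Asada--Fujiwara \cite{AF}, Fujiwara \cite{Fuji} or Kumano-go \cite{Kumano-go}, tracking the uniformity of the constants in $t$.

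Finally, I would apply Theorem \ref{theorem 1.1 } with $p=2$ for each $t\geq t_0$. Since $\widetilde a(t,\cdot,\cdot)$ is continuous and bounded on $\mathbb{T}^n\times\mathbb{R}^n$, the phase $(x,\xi)\mapsto x\cdot\xi+t\phi(t,x,\xi)$ is real-valued and continuous, and $T_t:L^2(\mathbb{R}^n)\to L^2(\mathbb{R}^n)$ is bounded, Theorem \ref{theorem 1.1 } gives $\Vert A_t\Vert_{\mathscr{B}(L^2(\mathbb{T}^n))}\leq C_2\Vert T_t\Vert_{\mathscr{B}(L^2(\mathbb{R}^n))}$ with $C_2$ depending only on $n$. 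Combined with the second step this yields $\Vert A_tf\Vert_{L^2(\mathbb{T}^n)}\leq C\sup_{|\alpha|,|\beta|\leq 2n+2}C_{\alpha,\beta}\Vert f\Vert_{L^2(\mathbb{T}^n)}$ for all $t\geq t_0$, which is the desired conclusion. The main obstacle is the second step: one has to exploit the support restriction $t|\xi|\geq C$ together with the homogeneity of $\phi$ to absorb the $t$-growth that would otherwise occur in the pure $\xi$-derivatives of $t\phi$, and to verify that after the dyadic rescaling all the quantities entering Hörmander's oscillatory-integral estimate — the size of the symbol, the relevant phase derivatives, and the lower bound on the Hessian determinant — are uniform in the shell index $j$ and in the parameter $t$. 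Once this uniform Euclidean estimate is in hand, the passage to the torus via Theorem \ref{theorem 1.1 } is immediate and only costs the fixed constant $C_2$.
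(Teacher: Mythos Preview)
Your proposal is correct and follows essentially the same three-step approach as the paper: extend the toroidal symbol to a Euclidean one via the Ruzhansky--Turunen extension (Corollary~\ref{IClases} in the paper), invoke the Ruzhansky--Wirth dispersive $L^2$-estimate (stated in the paper as Theorem~\ref{RuzWirthTheo}) to get uniform $L^2(\mathbb{R}^n)$-bounds for $T_t$, and then transfer to the torus via Theorem~\ref{theorem 1.1 }. The only difference is that the paper treats Ruzhansky--Wirth as a black box, whereas you also sketch its proof via dyadic decomposition and H\"ormander's oscillatory-integral theorem; that extra detail is not needed here.
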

Since the phase functions $\phi$ are usually considered homogeneous of order $1$ in $\xi\neq 0,$ i.e., $\phi(x,\lambda \xi)=\lambda\phi(x,\xi)$ we will work with phase functions defined on $\mathbb{T}^n\times \mathbb{R}^n$ instead of phase functions on $\mathbb{T}^n\times \mathbb{Z}^n.$ 

This paper is organized as follows. In  Section \ref{Preliminaries} we present some topics about the toroidal Fourier analysis  and the periodic analysis of pseudo-differential operators due to Ruzhansky and Turunen. In Sections \ref{Sec3} and \ref{Sec4} we prove our main results. Finally, in Section \ref{Sec5} we prove dispersive estimates for FSOs.

\section{Preliminaries}\label{Preliminaries}

In this section we present  some preliminaries on the analysis of periodic  pseudo-differential operators and FSOs. The main object in this {\textit{periodic analysis}} is the $n$-dimensional torus where the functions are defined.

\begin{definition}[The torus] The torus is the quotient space defined as follows: 
\begin{equation*}
\mathbb{T}^n=(\mathbb{R}/\mathbb{Z})^n = \mathbb{R}^n / \mathbb{Z}^n ,
\end{equation*}
where $\mathbb{Z}^n $ denotes the additive group of integral coordinates (the addition being, of course, the one derived from the vector structure of $\mathbb{R}^n).$ 
\end{definition}
Now, we recall the notion of  functions on the torus, identified with  $1$-periodic functions.
\begin{definition} [The 1-periodic functions] A function $f:\mathbb{R}^n \rightarrow \mathbb{C}$ is 1-periodic if 
\begin{equation*}
f(x+k)=f(x) \qquad \text{for every } x \in \mathbb{R}^n \text{ and } k \in \mathbb{Z}^n.
\end{equation*}
There is a clear one-to-one correspondence between these
functions on $\mathbb{R}^n$ and the functions on $\mathbb{T}^n$.
\end{definition}
The space of 1-periodic $m$-times continuously differentiable functions is denoted by $C^m(\mathbb{T}^n)$. We denote by 
$C^{\infty}(\mathbb{T}^n)=\bigcap\limits_{m \in \mathbb{Z}^n} C^m(\mathbb{T}^n)$ 
the space of the test functions. 
\begin{remark}
The convergence on $C^{\infty}(\mathbb{T}^n)$ is defined by: 
\begin{equation*}
u_j \rightarrow u,
\end{equation*}
if and only if 
\begin{equation*}
\partial^{\alpha}u_j\rightarrow \partial^{\alpha}u \text{ uniformly for all } \alpha \in \mathbb{N}_0^{n}.
\end{equation*}
\end{remark}
\begin{definition}[Schwartz space $\mathcal{S}(\mathbb{Z}^n)$]
A function $q:\mathbb{Z}^n\rightarrow \mathbb{C}$ belongs to $  \mathcal{S}(\mathbb{Z}^n)$ if for any $ M>0, $ there exists a constant $C_{q, M}>0$ such that 
\begin{equation*}
\lvert q (\xi) \rvert \leq C_{q, M} \langle \xi \rangle ^{-M},\,\,\langle\xi\rangle=(1+|\xi|^2)^\frac{1}{2},
\end{equation*}
holds for all $\xi \in  \mathbb{Z}^n$.
\end{definition}
\begin{remark}
The topology on $\mathcal{S}(\mathbb{Z}^n)$ is given by the seminorms $p_k$, where $k \in \mathbb{N}_0$ and \begin{equation*}
q_k(\phi)= \sup\limits_{\xi \in \mathbb{Z}^n} \langle \xi \rangle ^{k} |q(\xi)|.
\end{equation*}
\end{remark}
The Fourier transform furnishes the spectrum of the derivative  operator. So, since FSOs are motivated by applications to PDEs we need the Fourier transform.
\begin{definition}[Toroidal Fourier transformation $\mathscr{F}_{\mathbb{T}^n}$] 
The toroidal Fourier transformation is defined by: 
\begin{eqnarray*}
\mathscr{F}_{\mathbb{T}^n}: C^{\infty}(\mathbb{T}^n)  \rightarrow  \mathcal{S}(\mathbb{Z}^n) ,\,\,\,
 f  \mapsto  \hat{f}, 
\end{eqnarray*}
where 
\begin{equation*}
(\mathscr{F}_{\mathbb{T}^n}f)(\xi):=\hat{f}(\xi)= \int_{\mathbb{T}^n} e^{-i2\pi x \xi} f(x) dx .
\end{equation*}
Then, $\mathscr{F}_{\mathbb{T}^n}$ is a bijection and its inverse is given by: 
\begin{equation*}
f(x)= \sum_{\xi \in \mathbb{Z}^n} e^{i2\pi x \xi} \hat{f}(\xi)
\end{equation*}
\end{definition}
Now, we recall the definition of periodic Lebesgue spaces.
\begin{definition}[Spaces $L^p(\mathbb{T}^n)$]
For $1 \leq p < \infty$, let $L^p(\mathbb{T}^n)$ be the space of all $u\in L^1(\mathbb{T}^n)$ such that
\begin{equation*}
\lVert u \rVert_{L^p(\mathbb{T}^n)} = \left( \int_{\mathbb{T}^n} \lvert u (x)\rvert^p dx\right)^{ \frac{1}{p}} < \infty . 
\end{equation*}
For $p=\infty$, let $L^{\infty}(\mathbb{T}^n)$ be the space of all complex functions $u:\mathbb{T}^n\rightarrow \mathbb{C}$ such that
\begin{equation*}
\lVert u \rVert_{L^{\infty}(\mathbb{T}^n)} =  \text{esssup}_{x \in \mathbb{T}^n} \lvert u (x)\rvert < \infty. 
\end{equation*}
\end{definition}
The instrumental tool in the theory of FSOs is the notion of periodic symbol. Indeed, symbols allow us to classify operators by analytical and geometrical properties. For this, we need the following definition.
\begin{definition}[Space $C^{\infty}(\mathbb{Z}^n \times  \mathbb{T}^n)$] We say that a function $a \in  C^{\infty}(\mathbb{Z}^n \times  \mathbb{T}^n)$ if $a(., \xi)$ is smooth on $\mathbb{T}^n$ for all $\xi \in \mathbb{Z}^n $.
\end{definition}
 We denote by  $\delta_j \in \mathbb{N}_0^n,$ $j=1,\cdots, n,$ the canonical basis of $\mathbb{R}^n,$ namely, 
\begin{center}
$
(\delta_j)_i=
\begin{cases}
1, \qquad  \text{if } i=j \\ 
0, \qquad  \text{if } i\neq j.
\end{cases}
$ 
\end{center} Now, we recall the notion of discrete derivatives (difference operators).
\begin{definition}\label{diferenceoperator}
Let $\sigma : \mathbb{Z}^n \rightarrow \mathbb{C}$ and $1 \leq i,j \leq n$. We define the forward and backward partial difference operators $\Delta_{\xi_j},$ $\xi\in \mathbb{Z}^n,$ $j=1,\cdots,n,$ and $\overline{\Delta_{\xi_j}}$ respectively, by 
\begin{equation*}
\Delta_{\xi_j}\sigma(\xi)= \sigma(\xi+\delta_j)-\sigma(\xi),
\end{equation*}
\begin{equation*}
\overline{\Delta}_{\xi_j}\sigma(\xi)= \sigma(\xi)-\sigma(\xi-\delta_j).
\end{equation*}
Moreover,  for $\alpha \in \mathbb{N}_0^n,$ $\xi\in\mathbb{N}^n,$ define 
\begin{equation*}
\Delta_{\xi}^{\alpha}=\Delta_{\xi_1}^{\alpha_1} \ldots \Delta_{\xi_n}^{\alpha_n}.
\end{equation*}
\begin{equation*}
\overline{\Delta}_{\xi}^{\alpha}=\overline{\Delta}_{\xi_1}^{\alpha_1} \ldots \overline{\Delta}_{\xi_n}^{\alpha_n}.
\end{equation*}
Now, we consider symbols in $C^\infty(\mathbb{T}^n \times  \mathbb{Z}^n)$. These classes are motivated by the treatment of (periodic) elliptic and hypoelliptic  problems on  $\mathbb{T}^n$.
\end{definition}
\begin{definition}[Toroidal symbol class $S_{\rho, \delta}^m(\mathbb{T}^n \times  \mathbb{Z}^n)$]
Let $m \in \mathbb{R}$, $0 \leq \delta, \rho \leq 1 $. Then the toroidal symbol class $S_{\rho, \delta}^m(\mathbb{Z}^n \times  \mathbb{T}^n)$ consists of those functions $a(x, \xi) \in C^{\infty}(\mathbb{Z}^n \times  \mathbb{T}^n)$ which satisfy the toroidal symbol inequalities, that is, for $\alpha,\beta\in \mathbb{N}_0^n$ there exists $C_{\alpha,\beta}>0$ such that
\begin{equation*}
\lvert  \Delta_\xi ^\alpha \partial_x^\beta a (x, \xi)\rvert \leq C_{a\alpha \beta m}\langle \xi \rangle^{m- \rho \lvert \alpha \rvert + \delta \lvert \beta \rvert} ,
\end{equation*}
for every $x \in \mathbb{T}^n$ and for all $\xi \in \mathbb{Z}^n$.
\end{definition}
 The basic example of FSOs are pseudo-differential operators. We recall the following definition (see \cite{Ruz}).
\begin{definition}[Toroidal pseudo-differential operators]
If $a \in S_{\rho, \delta}^m(\mathbb{T}^n \times  \mathbb{Z}^n)$, we denote by $a(X,D)$ the corresponding toroidal pseudo-differential operator defined by 
\begin{equation} \label{pdo}
a(X,D)f(x)= \sum_{\xi \in \mathbb{Z}^n} e^{i2 \pi x \xi} a(x, \xi) \hat{f}(\xi).
\end{equation}
The series (\ref{pdo}) converges, e.g.,  if $f \in C^{\infty}(\mathbb{T}^n)$.  The set of operators  of the form (\ref{pdo}) with  $a \in S_{\rho, \delta}^m(\mathbb{T}^n \times  \mathbb{Z}^n)$ is denoted by $\textnormal{Op}(S_{\rho, \delta}^m(\mathbb{T}^n \times  \mathbb{Z}^n))$. If an operator $A$ satisfies $A \in \textnormal{Op}(S_{\rho, \delta}^m(\mathbb{T}^n \times  \mathbb{Z}^n))$ we denote its toroidal symbol by $\sigma_A=\sigma_A(x,\xi), \quad x \in \mathbb{T}^n, \quad \xi \in \mathbb{Z}^n$.
\end{definition}
\begin{proposition}[Difference formula for symbols] Let $\sigma_A\in C^k(\mathbb{T}^n\times \mathbb{Z}^n).$ For every $\alpha\in \mathbb{N}_0^n$ and $\beta\in \mathbb{N}_0^n,$ $|\beta|\leq k$ we have the identity
\begin{equation*}
\Delta_\xi ^\alpha \partial_x^\beta \sigma_A(x, \xi)=\sum_{\gamma \leq \alpha}(-1)^{\alpha - \gamma} \left( \begin{array} {c} \alpha  \\ \gamma \end{array}\right) \partial_x^\beta \sigma_A(x, \xi+ \gamma),
\end{equation*} for every $(x,\xi)\in \mathbb{T}^n\times \mathbb{Z}^n.$
\end{proposition}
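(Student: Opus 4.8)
The plan is to reduce the multi-index identity to the classical one-dimensional finite-difference formula and then iterate coordinate by coordinate. First I would observe that for distinct indices $j$ the operators $\Delta_{\xi_j}$ act on disjoint variables and hence commute with one another, and that each $\Delta_{\xi_j}$ commutes with every $x$-derivative $\partial_{x_l}$, since one acts only in the $\xi$ variable and the other only in the $x$ variable. The hypothesis $\sigma_A\in C^k(\mathbb{T}^n\times\mathbb{Z}^n)$ with $|\beta|\le k$ guarantees that $\partial_x^\beta\sigma_A(\cdot,\xi)$ is a well-defined function of $\xi\in\mathbb{Z}^n$, so that $\Delta_\xi^\alpha\partial_x^\beta\sigma_A=\partial_x^\beta\Delta_\xi^\alpha\sigma_A$; it is therefore enough to prove the identity with $\partial_x^\beta$ removed, i.e. for an arbitrary function $\sigma:\mathbb{Z}^n\to\mathbb{C}$, and then reinsert $\partial_x^\beta$ at the end.

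Next I would establish the one-dimensional case: for $g:\mathbb{Z}\to\mathbb{C}$ and $m\in\mathbb{N}_0$,
\[
\Delta^m g(\xi)=\sum_{k=0}^{m}(-1)^{m-k}\binom{m}{k}g(\xi+k),
\]
by induction on $m$. The cases $m=0$ and $m=1$ are immediate (the latter being precisely the definition of $\Delta$). For the inductive step I would write $\Delta^{m+1}g(\xi)=(\Delta^m g)(\xi+1)-(\Delta^m g)(\xi)$, expand each term by the inductive hypothesis, shift the summation index in the first sum by one, and combine the two resulting sums using Pascal's rule $\binom{m}{k-1}+\binom{m}{k}=\binom{m+1}{k}$; the extreme terms $k=0$ and $k=m+1$ fit automatically with the convention $\binom{m}{-1}=\binom{m}{m+1}=0$. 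Passing to multi-indices, I would then apply this formula successively in each coordinate: writing $\Delta_\xi^\alpha=\Delta_{\xi_1}^{\alpha_1}\cdots\Delta_{\xi_n}^{\alpha_n}$, each factor $\Delta_{\xi_j}^{\alpha_j}$ contributes a sum $\sum_{\gamma_j=0}^{\alpha_j}(-1)^{\alpha_j-\gamma_j}\binom{\alpha_j}{\gamma_j}$ together with the shift $\xi\mapsto\xi+\gamma_j\delta_j$; since the factors commute, composing them and using $\binom{\alpha}{\gamma}=\prod_j\binom{\alpha_j}{\gamma_j}$ and $(-1)^{\alpha-\gamma}=\prod_j(-1)^{\alpha_j-\gamma_j}$ yields exactly $\Delta_\xi^\alpha\sigma(\xi)=\sum_{\gamma\le\alpha}(-1)^{\alpha-\gamma}\binom{\alpha}{\gamma}\sigma(\xi+\gamma)$.

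Finally I would reinstate $\partial_x^\beta$: since $\partial_x^\beta$ is a finite-order linear differential operator in $x$ and the sum over $\gamma$ is finite, it passes through the sum, giving the stated identity for $\Delta_\xi^\alpha\partial_x^\beta\sigma_A(x,\xi)$. I do not anticipate any genuine obstacle here, as the argument is elementary combinatorics; the only point deserving a word is the commutation of $\partial_x^\beta$ with $\Delta_\xi^\alpha$, which is exactly where the regularity assumption $|\beta|\le k$ is used.
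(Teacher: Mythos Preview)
Your argument is correct: the reduction to a one-variable induction via commutation of the $\Delta_{\xi_j}$ among themselves and with $\partial_x^\beta$, followed by Pascal's rule, is exactly the standard elementary proof of this finite-difference binomial identity. Note, however, that the paper does not supply a proof of this proposition at all; it is stated in the preliminaries as a known formula (it is essentially Lemma~3.3.6 in Ruzhansky--Turunen \cite{Ruz}), so there is no ``paper's own proof'' to compare against. Your write-up would serve perfectly well as the omitted justification.
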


\begin{definition} [Toroidal amplitudes]
The class $\mathcal{A}_{\rho, \delta}^m (\mathbb{T}^n \times \mathbb{T}^n \times \mathbb{Z}^n)$ of toroidal amplitudes consists of the functions $a(x, y, \xi)$ which are smooth in $x$ and $y$ for all $\xi \in \mathbb{Z}^n$ and which satisfy: 
\begin{equation*}
\lvert  \Delta_\xi ^\alpha \partial_x^\beta \partial_y^\gamma a(x,y, \xi)\rvert \leq C_{a\alpha \beta \gamma m}\langle \xi \rangle^{m- \rho \lvert \alpha \rvert + \delta \lvert \beta +\gamma \rvert} .
\end{equation*}
We may define: 
\begin{equation*}
a(X,Y,D)f(x)= \sum_{\xi \in \mathbb{Z}^n} \int_{\mathbb{T}^n} e^{i2 \pi (x-y) \xi} a(x,y, \xi) f(y)dy. \qquad \text{for } f \in C^{\infty}(\mathbb{T}^n),
\end{equation*}
\end{definition}
We present, in a more general form, the definition of FSOs.
\begin{definition}[Amplitude Fourier series operators]
Amplitude Fourier series operators (AFSOs) are operators of the form: 
\begin{equation}
Tu(x)=\sum_{\xi \in \mathbb{Z}^n} \int_{\mathbb{T}^n} e^{2 \pi i (\phi(x, \xi) -y\xi)} a(x,y, \xi) u(y)dy,
\end{equation}
where $a \in C^{\infty}(\mathbb{T}^n \times \mathbb{T}^n \times \mathbb{Z}^n)$ is a toroidal amplitude and $\phi$ is a real-valued \textit{phase function} such that $x \rightarrow e^{2 \pi i \phi(x, \xi)}$ is 1-periodic for all $\xi \in \mathbb{Z}^n$. In this paper we also use the term amplitude periodic Fourier integral operators for AFSOs.
\end{definition}
In our further analysis  we will use the close relation between toroidal symbols (in the sense of Ruzhansky and Turunen) and periodic H\"ormander classes. We introduce such classes as follow.
\begin{definition}[Periodic symbol class $S_{\rho, \delta}^m(\mathbb{T}^n \times  \mathbb{R}^n)$]
Symbols in $S^m_{\rho,\delta}(\mathbb{T}^n\times \mathbb{R}^n)$ are symbols in $S^m_{\rho,\delta}(\mathbb{R}^n\times \mathbb{R}^n)$ (see \cite{Hor1, Ruz}) of order $m$ which are 1-periodic in $x.$
If $a(x,\xi)\in S^{m}_{\rho,\delta}(\mathbb{T}^n\times \mathbb{R}^n),$ the corresponding pseudo-differential operator is defined by
\begin{equation}\label{hh}
a(x,D_x)u(x)=\int_{\mathbb{T}^n}\int_{\mathbb{R}^n}e^{i2\pi\langle x-y,\xi \rangle}a(x,\xi)u(y)d\xi dy,\,\, u\in C^{\infty}(\mathbb{T}^n).
\end{equation}
\end{definition}

In our further analysis we will use  Corollary 4.5.7 of \cite{Ruz} which we present as Corollary \ref{IClases} below. 

\begin{corollary}\label{IClases}\ Let $0\leq \delta \leq 1,$ $0\leq \rho<1.$ Let $a:\mathbb{T}^n\times \mathbb{R}^n\rightarrow \mathbb{C}$ satisfying
\begin{equation}
\lvert  \partial_\xi ^\alpha \partial_x^\beta a (x, \xi)\rvert \leq C^{(1)}_{a\alpha \beta m}\langle \xi \rangle^{m- \rho \lvert \alpha \rvert + \delta \lvert \beta \rvert} ,
\end{equation}
for $|\alpha|\leq N_{1}$ and $|\beta|\leq N_{2}.$ Then the restriction $\tilde{a}=a|_{\mathbb{T}^n\times \mathbb{Z}^n}$ satisfies the estimate
\begin{equation}
\lvert  \Delta_\xi ^\alpha \partial_x^\beta \tilde{a} (x, \xi)\rvert \leq C_{a\alpha \beta m}C^{(1)}_{a\alpha \beta m}\langle \xi \rangle^{m- \rho \lvert \alpha \rvert + \delta \lvert \beta \rvert} ,
\end{equation}
for $|\alpha|\leq N_{1}$ and $|\beta|\leq N_{2}.$  The converse holds true, i.e, if a symbol $\tilde{a}(x,\xi)$ on $\mathbb{T}^n\times \mathbb{Z}^n$ satisfies  $(\rho,\delta)$-inequalities of the form 
\begin{equation}
\lvert  \Delta_\xi ^\alpha \partial_x^\beta \tilde{a} (x, \xi)\rvert \leq C^{(2)}_{a\alpha \beta m}\langle \xi \rangle^{m- \rho \lvert \alpha \rvert + \delta \lvert \beta \rvert} ,
\end{equation}
then $\tilde{a}(x,\xi)$ is  the restriction of a symbol $a(x,\xi)$ on $\mathbb{T}^n\times \mathbb{R}^n$ satisfying estimates of the type
\begin{equation}
\lvert  \partial_\xi ^\alpha \partial_x^\beta a (x, \xi)\rvert \leq C_{a\alpha \beta m}C^{(2)}_{a\alpha \beta m}\langle \xi \rangle^{m- \rho \lvert \alpha \rvert + \delta \lvert \beta \rvert} .
\end{equation}
\end{corollary}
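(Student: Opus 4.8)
The plan is to prove the two implications of Corollary~\ref{IClases} separately: the direct one (restriction from $\mathbb{T}^n\times\mathbb{R}^n$ to $\mathbb{T}^n\times\mathbb{Z}^n$) via a discrete mean value theorem, and the converse one (extension from $\mathbb{T}^n\times\mathbb{Z}^n$ to $\mathbb{T}^n\times\mathbb{R}^n$) by constructing an interpolation kernel tailored so that finite differences in $\xi$ become derivatives in the continuous variable. For the direct implication I would use the identity, valid for any function $g$ on $\mathbb{R}^n$ of class $C^{|\alpha|}$,
\begin{equation*}
\Delta_\xi^\alpha g(\xi)=\int_{[0,1]^{|\alpha|}}(\partial^\alpha g)\big(\xi+\Theta(t)\big)\,dt ,
\end{equation*}
where $\Theta(t)$ runs over the fixed bounded box $\prod_{j=1}^{n}[0,\alpha_j]$. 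Applying this to $g=\partial_x^\beta a(x,\cdot)$ (legitimate since $|\alpha|\le N_1$, $|\beta|\le N_2$) and using that $\langle\xi+\Theta\rangle\asymp\langle\xi\rangle$ uniformly for $\Theta$ in a bounded set, one obtains $|\Delta_\xi^\alpha\partial_x^\beta\tilde a(x,\xi)|\le C_{\alpha\beta m}\,C^{(1)}_{a\alpha\beta m}\langle\xi\rangle^{m-\rho|\alpha|+\delta|\beta|}$ with $C_{\alpha\beta m}$ depending only on $\alpha,\beta,m,n$; this is the first half, together with the stated control of the seminorms.

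For the converse, fix once and for all $\widehat{\phi}\in C^\infty_c((-1,1)^n)$ with $\sum_{k\in\mathbb{Z}^n}\widehat{\phi}(\omega+k)\equiv 1$ (a smooth partition of unity subordinate to the cover of $\mathbb{R}^n$ by unit cubes), and set $\phi:=\mathscr{F}_{\mathbb{R}^n}^{-1}\widehat{\phi}\in\mathcal{S}(\mathbb{R}^n)$; by Poisson summation the normalization is equivalent to $\phi(k)=\delta_{0,k}$ for $k\in\mathbb{Z}^n$. I then define the extension
\begin{equation*}
a(x,\eta):=\sum_{\xi\in\mathbb{Z}^n}\tilde a(x,\xi)\,\phi(\eta-\xi),\qquad (x,\eta)\in\mathbb{T}^n\times\mathbb{R}^n ,
\end{equation*}
which is $1$-periodic in $x$, converges together with all its $\eta$-derivatives (because $\tilde a(x,\cdot)$ grows at most polynomially and $\phi\in\mathcal{S}$), and restricts to $\tilde a$ on $\mathbb{T}^n\times\mathbb{Z}^n$ since $\phi(k-\xi)=\delta_{k,\xi}$. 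The crucial observation is that $\mathrm{supp}\,\widehat{\phi}$ keeps a positive distance from $\{\omega_j\in\mathbb{Z}\setminus\{0\}\}$ while $\omega_j=0$ is a removable singularity of $\omega_j\mapsto\frac{2\pi i\omega_j}{1-e^{-2\pi i\omega_j}}$, so that for every $\alpha$ the functions
\begin{equation*}
\widehat{\Phi_\alpha}(\omega):=\Big(\prod_{j=1}^{n}\Big(\frac{2\pi i\omega_j}{1-e^{-2\pi i\omega_j}}\Big)^{\alpha_j}\Big)\widehat{\phi}(\omega)
\end{equation*}
still lie in $C^\infty_c(\mathbb{R}^n)$, whence $\Phi_\alpha:=\mathscr{F}_{\mathbb{R}^n}^{-1}\widehat{\Phi_\alpha}\in\mathcal{S}(\mathbb{R}^n)$ and $\partial^\alpha\phi=\overline{\Delta}_\zeta^\alpha\Phi_\alpha$ on $\mathbb{R}^n$, i.e. $(\partial^\alpha\phi)(\eta-\xi)=(-1)^{|\alpha|}\Delta_\xi^\alpha\big[\,\Phi_\alpha(\eta-\xi)\,\big]$, the operator $\Delta_\xi$ acting on the $\xi$-dependence.

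With this device I would differentiate the series termwise and integrate by parts in $\xi$ (no boundary terms appear, as $\tilde a$ has polynomial growth and $\Phi_\alpha$ is Schwartz): for $|\alpha|\le N_1$, $|\beta|\le N_2$,
\begin{equation*}
\partial_\eta^\alpha\partial_x^\beta a(x,\eta)=\sum_{\xi\in\mathbb{Z}^n}\big(\overline{\Delta}_\xi^\alpha\partial_x^\beta\tilde a\big)(x,\xi)\,\Phi_\alpha(\eta-\xi) .
\end{equation*}
Since $\overline{\Delta}_\xi^\alpha\partial_x^\beta\tilde a(x,\xi)=\Delta_\xi^\alpha\partial_x^\beta\tilde a(x,\xi-\alpha)$ obeys the hypothesized bound (using $\langle\xi-\alpha\rangle\asymp\langle\xi\rangle$), I would then invoke Peetre's inequality $\langle\xi\rangle^{s}\le C_s\langle\eta\rangle^{s}\langle\xi-\eta\rangle^{|s|}$ (valid for any real $s$), the rapid decay of $\Phi_\alpha$, and $\sum_{\xi\in\mathbb{Z}^n}\langle\eta-\xi\rangle^{-M}\le C$ for $M>n$, to conclude $|\partial_\eta^\alpha\partial_x^\beta a(x,\eta)|\le C_{\alpha\beta m}\,C^{(2)}_{a\alpha\beta m}\langle\eta\rangle^{m-\rho|\alpha|+\delta|\beta|}$, with $C_{\alpha\beta m}$ depending only on $\alpha,\beta,m,n$ and the fixed kernel $\phi$. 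This yields the desired conclusion.

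The main obstacle is exactly the converse implication: with an arbitrary Schwartz $\phi$ satisfying only $\phi(k)=\delta_{0,k}$, the same extension formula recovers smoothness in $\eta$ but loses the decay $\langle\eta\rangle^{-\rho|\alpha|}$, because each $\partial_\eta$ falls on $\phi$ instead of producing a difference of $\tilde a$. Forcing $\mathrm{supp}\,\widehat{\phi}\subset(-1,1)^n$ is precisely what makes the multipliers $\big(\tfrac{2\pi i\omega_j}{1-e^{-2\pi i\omega_j}}\big)^{\alpha_j}$ legitimate symbols of the same compactly supported type, so that one $\eta$-derivative of $\phi$ can be traded for one finite difference of $\tilde a$ through summation by parts; iterating $|\alpha|$ times recovers the full gain $\langle\eta\rangle^{-\rho|\alpha|}$ and closes the argument.
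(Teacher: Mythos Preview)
The paper does not give its own proof of this statement; it is quoted as Corollary~4.5.7 of Ruzhansky--Turunen~\cite{Ruz} and used as a black box. Your argument is correct and is, in fact, essentially the proof given in that reference: the direct implication follows from the iterated mean-value representation $\Delta_\xi^\alpha g(\xi)=\int_{[0,1]^{|\alpha|}}(\partial^\alpha g)(\xi+\Theta(t))\,dt$, and the converse is obtained by the interpolation formula $a(x,\eta)=\sum_{\xi\in\mathbb{Z}^n}\tilde a(x,\xi)\,\phi(\eta-\xi)$ with a Schwartz kernel $\phi$ whose Fourier transform lies in $C_c^\infty((-1,1)^n)$ and partitions unity modulo $\mathbb{Z}^n$, so that each $\eta$-derivative can be converted into a $\xi$-difference via the identity $\partial^\alpha\phi=\overline{\Delta}^\alpha\Phi_\alpha$ and summation by parts. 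Your identification of why the support condition on $\widehat\phi$ is essential---namely, to make the multipliers $(2\pi i\omega_j)/(1-e^{-2\pi i\omega_j})$ smooth on $\operatorname{supp}\widehat\phi$---is exactly the point; without it one could not recover the gain $\langle\eta\rangle^{-\rho|\alpha|}$.
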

Let us denote $\Psi^{m}_{\rho,\delta,N_1,N_2}(\mathbb{T}^n\times \mathbb{Z}^n)$ to the set of operators associated with symbols satisfying $(\rho,\delta)$-estimates for all $|\alpha|\leq N_{1}$ and $|\beta|\leq N_2,$ and $\Psi^{m}_{\rho,\delta,N_1,N_2}(\mathbb{T}^n\times \mathbb{R}^n)$ defined similarly. Then we have the following equivalence (see Theorem 2.14 of \cite{Profe}):
\begin{proposition}
$\label{eqcc}($Equality of operators classes$).$ For $0\leq \delta \leq 1,$ $0<\rho\leq 1$ we have $\Psi^{m}_{\rho,\delta,N_1,N_2}(\mathbb{T}^n\times \mathbb{Z}^n)=\Psi^{m}_{\rho,\delta,N_1,N_2}(\mathbb{T}^n\times \mathbb{R}^n).$
\end{proposition}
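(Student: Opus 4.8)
The plan is to prove the asserted equality of operator classes by a double inclusion, reducing each inclusion to a single operator identity. The passage between the two families of symbols is already available: Corollary \ref{IClases} provides, for the prescribed finite orders $N_1,N_2$, a two-way correspondence between symbols on $\mathbb{T}^n\times\mathbb{R}^n$ and symbols on $\mathbb{T}^n\times\mathbb{Z}^n$ satisfying $(\rho,\delta)$-estimates, transferring the seminorm bounds in both directions via restriction and extension. Consequently the only substantive point is to verify that a periodic H\"ormander symbol $a$ and its restriction $\widetilde a=a|_{\mathbb{T}^n\times\mathbb{Z}^n}$ induce \emph{the same} operator on $C^\infty(\mathbb{T}^n)$.

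To prove the inclusion $\Psi^m_{\rho,\delta,N_1,N_2}(\mathbb{T}^n\times\mathbb{R}^n)\subseteq\Psi^m_{\rho,\delta,N_1,N_2}(\mathbb{T}^n\times\mathbb{Z}^n)$, I would take $a\in S^m_{\rho,\delta}(\mathbb{T}^n\times\mathbb{R}^n)$ and regard the operator $a(x,D_x)$ in \eqref{hh} as the Euclidean pseudo-differential operator acting on the $1$-periodic extension of $u\in C^\infty(\mathbb{T}^n)$ to $\mathbb{R}^n$. Expanding $u$ in its Fourier series and using that the Euclidean Fourier transform of a $1$-periodic function is the lattice-supported measure $\sum_{\eta\in\mathbb{Z}^n}\widehat u(\eta)\delta_\eta$ — equivalently, periodizing the Euclidean kernel through the Poisson summation formula in the $\xi$-variable,
\begin{equation*}
\sum_{\eta\in\mathbb{Z}^n}a(x,\eta)e^{i2\pi z\eta}=\sum_{k\in\mathbb{Z}^n}K_a(x,z-k),\qquad K_a(x,z):=\int_{\mathbb{R}^n}e^{i2\pi z\xi}a(x,\xi)\,d\xi,
\end{equation*}
so that the periodized Euclidean kernel coincides with the kernel of the toroidal operator — I would arrive at
\begin{equation*}
a(x,D_x)u(x)=\sum_{\eta\in\mathbb{Z}^n}e^{i2\pi x\eta}\widetilde a(x,\eta)\widehat u(\eta)=\widetilde a(X,D)u(x),\qquad \widetilde a=a|_{\mathbb{T}^n\times\mathbb{Z}^n}.
\end{equation*}
By Corollary \ref{IClases}, $\widetilde a\in S^m_{\rho,\delta}(\mathbb{T}^n\times\mathbb{Z}^n)$ within the orders $N_1,N_2$, which establishes the inclusion.

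For the reverse inclusion I would begin with a toroidal symbol $\widetilde a\in S^m_{\rho,\delta}(\mathbb{T}^n\times\mathbb{Z}^n)$, apply the converse part of Corollary \ref{IClases} to obtain an extension $a\in S^m_{\rho,\delta}(\mathbb{T}^n\times\mathbb{R}^n)$ with $a|_{\mathbb{T}^n\times\mathbb{Z}^n}=\widetilde a$ and comparable seminorms, and then invoke the operator identity just proved to conclude that $\widetilde a(X,D)=a(x,D_x)$ on $C^\infty(\mathbb{T}^n)$. Together with the first inclusion this yields the equality $\Psi^m_{\rho,\delta,N_1,N_2}(\mathbb{T}^n\times\mathbb{Z}^n)=\Psi^m_{\rho,\delta,N_1,N_2}(\mathbb{T}^n\times\mathbb{R}^n)$.

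I expect the main obstacle to lie in making the kernel collapse rigorous. When $m\geq -n$ the symbol $a(x,\cdot)$ is not integrable, so $K_a(x,\cdot)$ exists only as an oscillatory integral and Poisson summation cannot be applied naively; the underlying fact that the Euclidean operator evaluates the symbol solely at the integer frequencies must therefore be justified in the distributional sense. Converting this into a proof requires a regularization — for instance inserting a cutoff $\chi(\varepsilon\xi)$, integrating by parts in $\xi$ to exploit the decay granted by the difference and derivative estimates of Corollary \ref{IClases}, and letting $\varepsilon\to0$ — together with a verification that the resulting interchanges of summation and integration are legitimate on $C^\infty(\mathbb{T}^n)$, where the rapid decay of $\widehat u$ against the at-most-polynomial growth of the symbol guarantees absolute convergence of the integer series defining $\widetilde a(X,D)u$.
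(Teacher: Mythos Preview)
The paper does not supply its own proof of this proposition; it simply records the statement and points to Theorem 2.14 of Delgado \cite{Profe}. Your argument is the standard one and is correct in outline: the symbol correspondence is exactly Corollary \ref{IClases}, and the remaining content is the operator identity $a(x,D_x)=\widetilde a(X,D)$ on $C^\infty(\mathbb{T}^n)$, which you reduce to the fact that the Euclidean quantization applied to a periodic function samples the symbol only at integer frequencies. Your identification of the obstacle --- that Poisson summation, equivalently the collapse $\mathscr{F}_{\mathbb{R}^n}u=\sum_\eta \widehat u(\eta)\delta_\eta$, must be justified distributionally when $a(x,\cdot)\notin L^1$ --- is accurate, and the cure you propose (insert a cutoff $\chi(\varepsilon\xi)$, integrate by parts to gain decay, pass to the limit using rapid decay of $\widehat u$) is exactly how this is handled in the literature; see \cite[\S4.5]{Ruz} or \cite{Mc}. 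So your proposal is a faithful reconstruction of the argument the paper is citing.
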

In the next section we generalize the following classical result (see Theorem 3.8 of Stein and Weiss \cite{SteWei}).

\begin{proposition}\label{stein}
Suppose $1\leq p\leq \infty$ and $T_\sigma$ be a Fourier multiplier on $ \mathbb{R}^n$ with symbol $\sigma(\xi)$. If  $\sigma(\xi)$  is continuous at each point of $\mathbb{Z}^n$ then the periodic operator defined by
\begin{equation}
\sigma(D)f(x)=\sum_{\xi \in \mathbb{Z}^n}e^{i2\pi x\xi}\sigma(\xi)\widehat{u}(\xi),
\end{equation}
is a bounded operator from $L^p(\mathbb{T}^n)$ into $L^p(\mathbb{T}^n).$
\end{proposition}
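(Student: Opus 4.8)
The plan is to deduce Proposition~\ref{stein} from the classical transference principle, which says roughly that a Fourier multiplier on $\mathbb{R}^n$ whose symbol is suitably regular at integer points restricts to a Fourier multiplier on $\mathbb{T}^n$ with no loss in the operator norm. Concretely, I would first reduce to the case $1<p<\infty$, since for $p=1$ and $p=\infty$ the relevant multipliers are essentially $L^1$-Fourier transforms of finite measures and the periodic analogue follows by periodizing the measure; the substantive range is $1<p<\infty$. So assume $1<p<\infty$ and that $T_\sigma:L^p(\mathbb{R}^n)\to L^p(\mathbb{R}^n)$ is bounded with $\sigma$ continuous at each point of $\mathbb{Z}^n$.

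The key mechanism is a dilation-and-limit argument. For $\varepsilon>0$ consider the rescaled symbol $\sigma_\varepsilon(\xi):=\sigma(\varepsilon\xi)$; the operator $T_{\sigma_\varepsilon}$ is just a conjugate of $T_\sigma$ by the dilation $f\mapsto f(\varepsilon\,\cdot)$, hence $\Vert T_{\sigma_\varepsilon}\Vert_{\mathscr{B}(L^p(\mathbb{R}^n))}=\Vert T_\sigma\Vert_{\mathscr{B}(L^p(\mathbb{R}^n))}$ for every $\varepsilon$. Applied to a trigonometric polynomial $f(x)=\sum_{\xi\in F}c_\xi e^{i2\pi x\xi}$ with $F\subset\mathbb{Z}^n$ finite, and testing against smooth bumps that concentrate appropriately, the de~Leeuw-type transference argument shows that the periodic multiplier operator with symbol $\xi\mapsto\lim_{\varepsilon\to 0^+}\sigma_\varepsilon(\xi/ \varepsilon)$ — which by continuity of $\sigma$ at the lattice points equals precisely $\sigma(\xi)$ for $\xi\in\mathbb{Z}^n$ — is bounded on $L^p(\mathbb{T}^n)$ with the same bound. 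In practice I would run this as follows: given a trigonometric polynomial $f$, pick $\psi\in\mathcal{S}(\mathbb{R}^n)$ with $\widehat{\psi}$ supported near the origin and $\widehat{\psi}(0)=1$, set $f_\varepsilon(x):=f(x)\psi(\varepsilon x)$, observe $\widehat{f_\varepsilon}$ is a sum of narrow bumps centered at the points of $F$, compute $T_\sigma f_\varepsilon$, use continuity of $\sigma$ at each $\xi\in F$ to replace $\sigma$ on the $\xi$-th bump by the constant $\sigma(\xi)$ up to an error that vanishes as $\varepsilon\to0$, and finally let $\varepsilon\to0$ after the change of variables $x\mapsto x/\varepsilon$ and a periodization to pass from $L^p(\mathbb{R}^n)$ norms of $f_\varepsilon$ to $L^p(\mathbb{T}^n)$ norms of $f$. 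Since trigonometric polynomials are dense in $L^p(\mathbb{T}^n)$, this yields the boundedness of $\sigma(D)$ on $L^p(\mathbb{T}^n)$ with $\Vert\sigma(D)\Vert_{\mathscr{B}(L^p(\mathbb{T}^n))}\leq\Vert T_\sigma\Vert_{\mathscr{B}(L^p(\mathbb{R}^n))}$.

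The main obstacle is the careful handling of the limiting procedure: one must ensure that the bumps $\widehat{f_\varepsilon}$ are genuinely disjoint for small $\varepsilon$ (so that the multiplier acts independently on each), that the error incurred in replacing $\sigma$ by its value at the lattice point is controlled in $L^p$ rather than merely pointwise — this is where continuity at the integer points, together with the $L^p$-boundedness providing a uniform bound to interpolate against, does the work — and that the renormalization by $\varepsilon^{-n/p}$ exactly matches the conversion of the $L^p(\mathbb{R}^n)$-norm of a concentrated profile to the $L^p(\mathbb{T}^n)$-norm of the underlying periodic function. A convenient way to sidestep some of this is to invoke the known transference theorem of de~Leeuw directly in the form: if $m\in C(\mathbb{R}^n)\cap L^\infty(\mathbb{R}^n)$ is an $L^p(\mathbb{R}^n)$-multiplier then its restriction $m|_{\mathbb{Z}^n}$ is an $L^p(\mathbb{T}^n)$-multiplier with $\Vert m|_{\mathbb{Z}^n}\Vert_{M_p(\mathbb{T}^n)}\leq\Vert m\Vert_{M_p(\mathbb{R}^n)}$; the hypothesis here is weaker (continuity only at lattice points), but one checks that only continuity at the points of $\mathbb{Z}^n$ is used in the proof of that theorem, so the same argument applies verbatim. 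I expect the write-up to consist of stating the transference lemma, verifying that its proof needs only lattice-point continuity, and concluding.
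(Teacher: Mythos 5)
Your fallback option --- simply invoking the known transference theorem with continuity required only at the lattice points --- is in substance exactly what the paper does: Proposition \ref{stein} is quoted there as Theorem 3.8 of Stein and Weiss \cite{SteWei} and is not reproved, and the hypothesis of that theorem is already continuity at the points of $\mathbb{Z}^n$ only, so nothing needs to be ``checked''. The paper's own working form of the argument is the one it uses for the generalization in Theorem \ref{teorema principal11}: for trigonometric polynomials $P,Q$ and Gaussian weights $w_\delta(x)=e^{-\delta|x|^2}$ one computes the scalar pairing $\varepsilon^{n/2}\int_{\mathbb{R}^n} T(Pw_{\varepsilon/p})\,\overline{Q}\,w_{\varepsilon/p'}\,dx$, identifies its limit as $\varepsilon\to 0$ with the torus pairing $\int_{\mathbb{T}^n}(AP)\overline{Q}$ using continuity of the symbol at the lattice points plus dominated convergence (Lemmas \ref{lemma0} and \ref{lemma1}), and only then applies H\"older with the operator $T_\sigma$ itself together with the Gaussian-to-torus convergence of the weighted norms. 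The crucial structural point is that the ``replace $\sigma$ near $k$ by $\sigma(k)$'' step is justified only inside a scalar limit, never in an $L^p(\mathbb{R}^n)$ norm.

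That is precisely where your direct sketch has a gap. You propose to control the replacement error in $L^p$ by ``continuity at the integer points, together with the $L^p$-boundedness providing a uniform bound to interpolate against''; but smallness of the error symbol in sup norm gives smallness of its $L^2$-multiplier norm, while the $L^p$ hypothesis gives only a uniform bound on its $L^p$-multiplier norm, and interpolation then yields smallness for exponents strictly between $2$ and $p$, not for $p$ itself. Concretely, after rescaling, the error attached to the frequency $k$ is $\mathcal{F}^{-1}\bigl[(\sigma(k+\varepsilon\cdot)-\sigma(k))\widehat{\psi}\bigr]$, which tends to $0$ in $L^2$ and is bounded in $L^\infty$, so for $2\le p<\infty$ the normalized error does vanish; for $1<p<2$, however, the family has no uniform decay and only a uniform $L^p$ bound, so there is no obvious reason the error vanishes in $L^p(\mathbb{R}^n)$. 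Two repairs: (i) reduce to $p\ge 2$ by passing to adjoints, since both the Euclidean and periodic multiplier norms are unchanged under $p\mapsto p'$ and $\overline{\sigma}$ is continuous at $\mathbb{Z}^n$ exactly when $\sigma$ is; or (ii) abandon the norm-level replacement and run the bilinear/duality argument of Stein--Weiss, as the paper does. Two smaller points: the opening dilation remark is circular, since $\sigma_\varepsilon(\xi/\varepsilon)=\sigma(\xi)$ identically, and the dilation invariance is never actually used in your concrete plan; and your reduction of $p=1,\infty$ via periodization of the representing measure is fine, though the cited transference theorem already covers those endpoints.
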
 It is important to mention that vector valued pseudo-differential on the torus can be found in \cite{Bie} and references therein. The reference \cite{Duvan4} presents a complete classification for periodic pseudo-differential operators on $L^p$-spaces.

\section{From the boundedness of FIOs to the boundedness of SFOs}\label{Sec3}
In this section we address two problems. The first one, is the interaction between  the boundedness of FIOs and its periodic counterpart. More precisely we will investiagte how the boundedness of FIOs implies the boundedness of  FSOs. The second problem that we analyse is how to extend the boundedness of FSOs  with symbols depending only on the frequency variable, i.e FSOs of the type 
\begin{equation}
A_{\phi,a}f(x):=\sum_{\xi\in\mathbb{Z}^n}e^{2\pi i\phi(x,\xi)}a(\xi)(\mathscr{F}_{{\mathbb{T}^n}}f)(\xi),
\end{equation} to general FSOs where the symbols depend of both variables $x$ and $\xi.$ If we fix a phase function $\phi$ defined on $\mathbb{T}^n\times \mathbb{Z}^n,$ we denote by $\mathfrak{S}_{\phi,p}(\mathbb{T}^n)$  the set  of  symbols  $a:\mathbb{Z}^n\rightarrow \mathbb{C}$ with periodic Fourier integral operator $A_{\phi,a}$ admitting a bounded extension  $A_{\phi,a}:L^p(\mathbb{T}^n)\rightarrow L^p(\mathbb{T}^n).$ We define on $\mathfrak{S}_{\phi,p}(\mathbb{T}^n)$ the natural norm
\begin{equation}
\Vert a\Vert_{\mathfrak{S}_{\phi,p}(\mathbb{T}^n)}:=\Vert A_{\phi,a} \Vert_{\mathscr{B}(L^p(\mathbb{T}^n))},\,\,1\leq p\leq \infty.
\end{equation}
To be precise,  we will define the class of periodic Fourier integral operators that we will investigate.
\begin{definition}
A continuous linear operator $A:C^\infty(\mathbb{T}^n)\rightarrow\mathscr{D}'(\mathbb{T}^n)$ is a periodic Fourier integral operator, if there exist a real-valued phase function $\phi:\mathbb{T}^n\times\mathbb{Z}^n\rightarrow \mathbb{R} ,$  homogeneous of order $1$ in $\xi\neq 0,$ and a symbol $a\in S^m_{1,0}(\mathbb{T}^n\times \mathbb{Z}^n)$ such that \begin{equation}
Af(x)=A_{\phi,a}f(x):=\sum_{\xi\in\mathbb{Z}^n}e^{2\pi i\phi(x,\xi)}a(\xi)(\mathscr{F}_{{\mathbb{T}^n}}f)(\xi),\,\,f\in C^\infty(\mathbb{T}^n).
\end{equation} 
\end{definition}

Our main theorem in this section is the following generalisation of one classical result for Fourier multipliers (see Theorem \ref{stein} or Theorem 3.8 of Stein and Weiss \cite[Chapter VII]{SteWei}). The corresponding statement for FSOs  is the following. 

\begin{theorem}\label{teorema principal11}
Let $1< p<\infty.$ Let us assume that $\phi$ is a real valued continuous function defined on $\mathbb{T}^n\times\mathbb{R}^n.$ If  $a:\mathbb{T}^n\times\mathbb{R}^n\rightarrow \mathbb{C}$ is a continuous bounded function and the Fourier integral operator
\begin{equation}
T_{\phi,a}f(x):=\int_{\mathbb{R}^n}e^{2\pi i\phi(x,\xi)}a(x,\xi)(\mathscr{F}_{\mathbb{R}^n}f)(\xi)d\xi
\end{equation} extends to a bounded operator  $T_{\phi,a}:L^p(\mathbb{R}^n)\rightarrow L^p(\mathbb{R}^n),$ then the periodic Fourier integral operator
\begin{equation}\label{cp'}
A_{\phi,a}f(x):=\sum_{\xi\in\mathbb{Z}^n}e^{2\pi i\phi(x,\xi)}a(x,\xi)(\mathscr{F}_{{\mathbb{T}^n}}f)(\xi)
\end{equation} also extends to a bounded operator $A_{\phi,a}:L^p(\mathbb{T}^n)\rightarrow L^p(\mathbb{T}^n).$ Moreover, for  some  $C_p>0,$ the estimate $\Vert A_{\phi,a} \Vert_{\mathscr{B}(L^p(\mathbb{T}^n))}\leq C_p\Vert T_{\phi,a}\Vert_{\mathscr{B}(L^p(\mathbb{R}^n))}$ holds true.
\end{theorem}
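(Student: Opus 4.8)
The strategy is a transference/periodization argument in the spirit of de Leeuw's theorem and the classical result of Stein–Weiss (Proposition~\ref{stein}) recalled above, adapted to the presence of a general phase $\phi$ and an $x$-dependent symbol. The key observation is that the periodic operator $A_{\phi,a}$ should be recoverable from the Euclidean operator $T_{\phi,a}$ by applying it to suitably dilated/modulated test functions and taking a limit. Concretely, I would fix $f\in C^\infty(\mathbb{T}^n)$, write $f(x)=\sum_{\xi\in\mathbb{Z}^n}e^{2\pi i x\cdot\xi}\widehat f(\xi)$ (a finite-sum approximation suffices, then pass to the limit), and for each $\varepsilon>0$ consider a Euclidean function $g_\varepsilon$ whose Fourier transform $\mathscr{F}_{\mathbb{R}^n}g_\varepsilon$ is an $L^1$-approximation of the sum of point masses $\sum_\xi \widehat f(\xi)\,\delta_\xi$; for instance take $\widehat{g_\varepsilon}(\eta)=\varepsilon^{-n}\sum_{\xi}\widehat f(\xi)\,\psi((\eta-\xi)/\varepsilon)$ with $\psi\in C_c^\infty$, $\int\psi=1$, supported in a ball of radius $1/2$ so the bumps do not overlap over $\mathbb{Z}^n$. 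Then
\[
T_{\phi,a}g_\varepsilon(x)=\int_{\mathbb{R}^n}e^{2\pi i\phi(x,\eta)}a(x,\eta)\,\widehat{g_\varepsilon}(\eta)\,d\eta
=\sum_{\xi}\widehat f(\xi)\int_{\mathbb{R}^n}e^{2\pi i\phi(x,\xi+\varepsilon\zeta)}a(x,\xi+\varepsilon\zeta)\,\psi(\zeta)\,d\zeta,
\]
and since $\phi$ and $a$ are continuous, the dominated convergence theorem (using boundedness of $a$ and $|\psi|\in L^1$, together with the finiteness of the sum) gives $T_{\phi,a}g_\varepsilon(x)\to \sum_\xi e^{2\pi i\phi(x,\xi)}a(x,\xi)\widehat f(\xi)=A_{\phi,a}f(x)$ pointwise in $x$ as $\varepsilon\to0$.

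The next step is to compare $\|g_\varepsilon\|_{L^p(\mathbb{R}^n)}$ with $\|f\|_{L^p(\mathbb{T}^n)}$ and, on the output side, to compare $\|T_{\phi,a}g_\varepsilon\|_{L^p(\mathbb{R}^n)}$ restricted to a fundamental domain with $\|A_{\phi,a}f\|_{L^p(\mathbb{T}^n)}$. For the input, $g_\varepsilon$ is a modulated Schwartz function: $g_\varepsilon(x)=\big(\sum_\xi \widehat f(\xi)e^{2\pi i x\cdot\xi}\big)\cdot \widehat\psi(-\varepsilon x)\cdot(\text{normalisation})$ — i.e. it is essentially $f(x)$ times a slowly-varying envelope $\widehat\psi(\varepsilon x)$ of width $1/\varepsilon$. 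Hence $\|g_\varepsilon\|_{L^p(\mathbb{R}^n)}^p\sim \varepsilon^{-n}\|f\|_{L^p(\mathbb{T}^n)}^p\,\|\widehat\psi\|_{L^p(\mathbb{R}^n)}^p$ up to a constant independent of $f$ and $\varepsilon$, by a Riemann-sum/periodization computation (splitting $\mathbb{R}^n$ into unit cubes and using that the envelope is roughly constant on each cube as $\varepsilon\to0$). For the output, on a fixed fundamental domain $Q=[0,1)^n$ one has $|T_{\phi,a}g_\varepsilon(x)|\to|A_{\phi,a}f(x)|$ pointwise, so by Fatou
\[
\|A_{\phi,a}f\|_{L^p(\mathbb{T}^n)}^p=\int_Q|A_{\phi,a}f(x)|^p dx\le \liminf_{\varepsilon\to0}\int_Q|T_{\phi,a}g_\varepsilon(x)|^p dx\le \liminf_{\varepsilon\to0}\|T_{\phi,a}g_\varepsilon\|_{L^p(\mathbb{R}^n)}^p,
\]
and the operator bound gives $\|T_{\phi,a}g_\varepsilon\|_{L^p(\mathbb{R}^n)}\le \|T_{\phi,a}\|_{\mathscr{B}(L^p(\mathbb{R}^n))}\|g_\varepsilon\|_{L^p(\mathbb{R}^n)}$. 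Combining and letting the $\varepsilon^{-n/p}$ factors cancel yields $\|A_{\phi,a}f\|_{L^p(\mathbb{T}^n)}\le C_p\|T_{\phi,a}\|_{\mathscr{B}(L^p(\mathbb{R}^n))}\|f\|_{L^p(\mathbb{T}^n)}$ for all $f$ in a dense subspace of $L^p(\mathbb{T}^n)$, which gives the bounded extension. (An alternative packaging: integrate over the large cube $[0,N]^n$ instead of a single period and use near-periodicity of $T_{\phi,a}g_\varepsilon$ on the scale where the envelope is flat; the bookkeeping is the same.)

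The main obstacle I anticipate is \emph{not} the convergence $T_{\phi,a}g_\varepsilon\to A_{\phi,a}f$, which is soft given continuity and boundedness of $a$ and a finite frequency sum, but rather the precise two-sided comparison of the $L^p$ norms — in particular controlling $\|g_\varepsilon\|_{L^p(\mathbb{R}^n)}$ from above by (a constant times) $\varepsilon^{-n/p}\|f\|_{L^p(\mathbb{T}^n)}$ uniformly in $\varepsilon$, and simultaneously recovering enough of the output norm on a single period without the tails of $T_{\phi,a}g_\varepsilon$ over all of $\mathbb{R}^n$ polluting the estimate. The envelope $\widehat\psi$ is Schwartz and hence $\widehat\psi(\varepsilon\,\cdot)\in L^p$ with $\|\widehat\psi(\varepsilon\,\cdot)\|_{L^p}^p=\varepsilon^{-n}\|\widehat\psi\|_{L^p}^p$, so the input side is clean; for the output side one must be slightly careful that $|T_{\phi,a}g_\varepsilon|$ is, to leading order, the periodic function $|A_{\phi,a}f|$ times the same envelope, which again follows from continuity of $\phi$ and $a$ and dominated convergence on each compact cube, so Fatou on one period is in fact enough and no control of the tails is needed. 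Since the only quantitative inputs are the boundedness of $a$, the continuity of $a$ and $\phi$, and the $L^p(\mathbb{R}^n)$-bound of $T_{\phi,a}$, the constant $C_p$ depends only on $p$ and $n$ (through $\psi$), as claimed.
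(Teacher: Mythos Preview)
Your transference strategy is in the same spirit as the paper's (and as de Leeuw / Stein--Weiss), but the scaling bookkeeping does not close as written, and this is the crux of the argument.

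With your normalisation one has $g_\varepsilon(x)=f(x)\,\check\psi(\varepsilon x)$, so indeed $\|g_\varepsilon\|_{L^p(\mathbb{R}^n)}^p\sim \varepsilon^{-n}\|f\|_{L^p(\mathbb{T}^n)}^p$. But then the chain
\[
\int_{Q}|A_{\phi,a}f|^p\;\le\;\liminf_{\varepsilon\to 0}\int_{Q}|T_{\phi,a}g_\varepsilon|^p\;\le\;\liminf_{\varepsilon\to 0}\|T_{\phi,a}g_\varepsilon\|_{L^p(\mathbb{R}^n)}^p\;\le\;\|T\|^p\liminf_{\varepsilon\to 0}\|g_\varepsilon\|_{L^p(\mathbb{R}^n)}^p=\infty
\]
is useless: the left-hand side is $O(1)$ while the right-hand side carries an uncompensated $\varepsilon^{-n}$. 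There is nothing for the $\varepsilon^{-n/p}$ to cancel against if you only harvest the output on a single period. Your proposed remedy, that $|T_{\phi,a}g_\varepsilon(x)|$ is to leading order $|A_{\phi,a}f(x)|\cdot|\check\psi(\varepsilon x)|$ globally, is not correct in general: $T_{\phi,a}$ does not commute with multiplication by the slowly varying envelope, and your own formula $T_{\phi,a}g_\varepsilon(x)=\sum_\xi\widehat f(\xi)\int e^{2\pi i\phi(x,\xi+\varepsilon\zeta)}a(x,\xi+\varepsilon\zeta)\psi(\zeta)\,d\zeta$ shows no envelope factor at all on the output side --- it just converges pointwise to $A_{\phi,a}f(x)$.

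The paper resolves exactly this scaling mismatch by working in duality rather than via Fatou. One takes trigonometric polynomials $P,Q$, sets $w_\delta(x)=e^{-\delta|x|^2}$, and proves the limit identity
\[
\lim_{\varepsilon\to 0}\varepsilon^{n/2}\int_{\mathbb{R}^n}\bigl[T(Pw_{\varepsilon/p})\bigr](x)\,\overline{Q(x)}\,w_{\varepsilon/p'}(x)\,dx=c_{p,n}\int_{\mathbb{T}^n}(AP)(x)\,\overline{Q(x)}\,dx
\]
using Lemmas~\ref{lemma0}--\ref{lemma1}. Then H\"older and the operator bound give a right-hand side controlled by $\varepsilon^{n/2}\|T\|\,\|Pw_{\varepsilon/p}\|_{L^p(\mathbb{R}^n)}\|Qw_{\varepsilon/p'}\|_{L^{p'}(\mathbb{R}^n)}$; since each Gaussian-weighted norm scales like $\varepsilon^{-n/(2p)}$ and $\varepsilon^{-n/(2p')}$ respectively, the product is $\varepsilon^{-n/2}$, which is exactly cancelled by the prefactor. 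The point is that the Gaussian on the \emph{test function} side is what captures $\sim\varepsilon^{-n}$ periods of the output and makes the two scalings match --- this is the step your argument is missing.
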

We begin with the proof of this result by considering the following technical  lemmas. The next result is presented as  Lemma 3.9 in \cite{SteWei}.
\begin{lemma}\label{lemma0} Suppose $f$ is a continuous periodic function on $\mathbb{R}^n.$ If $\{\epsilon_m\}$ is a sequence of positive  real numbers, then
\begin{equation}
\lim_{m\rightarrow\infty}\epsilon_m^{\frac{n}{2}}\int_{\mathbb{R}^n}e^{-\epsilon_m|x|^2}f(x)dx=\int_{\mathbb{T}^n}f(x)dx
\end{equation} provided that $\epsilon_{m}\rightarrow 0.$
\end{lemma}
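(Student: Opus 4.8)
The plan is to collapse the integral over $\mathbb{R}^n$ onto a single fundamental domain of the lattice $\mathbb{Z}^n$ and then to identify the resulting weight as a convergent lattice (Riemann) sum. Set $Q=[0,1)^n$, so that $\mathbb{R}^n=\bigsqcup_{k\in\mathbb{Z}^n}(Q+k)$. Using the absolute convergence of the Gaussian series (each term is dominated, after summation, by $\|f\|_{\infty}\int_{\mathbb{R}^n}e^{-\epsilon_m|x|^2}\,dx<\infty$, so Tonelli's theorem applies) together with the $\mathbb{Z}^n$-periodicity $f(x+k)=f(x)$, I would first rewrite
\[
\epsilon_m^{\frac{n}{2}}\int_{\mathbb{R}^n}e^{-\epsilon_m|x|^2}f(x)\,dx=\int_{Q}f(x)\,\Phi_m(x)\,dx,\qquad \Phi_m(x):=\epsilon_m^{\frac{n}{2}}\sum_{k\in\mathbb{Z}^n}e^{-\epsilon_m|x+k|^2}.
\]
Since $\int_{\mathbb{T}^n}f=\int_{Q}f$, the lemma reduces to showing that $\Phi_m\to c_n$ \emph{uniformly} on $\overline{Q}$ as $\epsilon_m\to 0$, where $c_n=\int_{\mathbb{R}^n}e^{-|y|^2}\,dy$ is the total mass of the Gaussian (normalised to $1$ in \cite{SteWei}). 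Granting this, the boundedness and continuity of $f$ on the compact set $\overline{Q}$ and the finiteness of $|\mathbb{T}^n|$ give $\int_{Q}f\,\Phi_m\to c_n\int_{Q}f$ by a routine dominated-convergence argument, which is the desired conclusion.

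To prove the uniform convergence of $\Phi_m$, the most economical route is the Poisson summation formula applied to the Schwartz function $y\mapsto e^{-\epsilon_m|y|^2}$, whose Fourier transform in the paper's normalisation is $\xi\mapsto(\pi/\epsilon_m)^{n/2}e^{-\pi^2|\xi|^2/\epsilon_m}$. This yields
\[
\Phi_m(x)=c_n\sum_{k\in\mathbb{Z}^n}e^{-\pi^2|k|^2/\epsilon_m}e^{2\pi i k\cdot x}=c_n\Bigl(1+\sum_{k\neq 0}e^{-\pi^2|k|^2/\epsilon_m}e^{2\pi i k\cdot x}\Bigr),
\]
and the tail is dominated, uniformly in $x\in\mathbb{R}^n$, by $\sum_{k\neq 0}e^{-\pi^2|k|^2/\epsilon_m}$, which tends to $0$ as $\epsilon_m\to0$; hence $\Phi_m\to c_n$ uniformly. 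A more hands-on alternative, not using Poisson summation, is to write $\Phi_m(x)=\epsilon_m^{n/2}\sum_{k\in\mathbb{Z}^n}g\bigl(\sqrt{\epsilon_m}\,(x+k)\bigr)$ with $g(y)=e^{-|y|^2}$, recognise it as a Riemann sum of mesh $\sqrt{\epsilon_m}$ for $\int_{\mathbb{R}^n}g$, and control the error uniformly in $x$ by exploiting the monotonicity of $g$ along coordinate half-lines and its rapid decay at infinity.

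The only genuinely substantive step is this uniform-in-$x$ passage from the periodised Gaussian $\Phi_m$ to the constant $c_n$; everything else---partitioning $\mathbb{R}^n$ into lattice cells, interchanging summation and integration, and taking a limit over a space of finite measure---is bookkeeping. Poisson summation is the cleanest tool here and makes both the uniformity and the exact value of the limiting constant transparent.
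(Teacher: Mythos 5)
Your argument is correct, and it is worth noting that the paper itself offers no proof of this lemma: it is quoted verbatim as Lemma 3.9 of Stein and Weiss \cite{SteWei}, so your periodization-plus-Poisson-summation argument is a self-contained substitute rather than a parallel to anything in the text. The unfolding $\int_{\mathbb{R}^n}e^{-\epsilon_m|x|^2}f\,dx=\int_Q f\,\Phi_m$ is justified exactly as you say (Tonelli with $|f|\le\|f\|_\infty$), and your Poisson-summation computation $\Phi_m(x)=\pi^{n/2}\sum_{k}e^{-\pi^2|k|^2/\epsilon_m}e^{2\pi i k\cdot x}$ with uniformly vanishing tail is right in the paper's Fourier normalisation; the hands-on Riemann-sum alternative you sketch is essentially the route Stein and Weiss take. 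One point you correctly flag but should state more bluntly: with the weight $e^{-\epsilon_m|x|^2}$ as printed, the limit is $c_n\int_{\mathbb{T}^n}f=\pi^{n/2}\int_{\mathbb{T}^n}f$, not $\int_{\mathbb{T}^n}f$; the constant is $1$ only for the Stein--Weiss Gaussian $e^{-\pi\epsilon_m|x|^2}$. This is a normalisation slip in the paper's statement rather than in your proof --- indeed the later application in the proof of Theorem \ref{teorema principal11} uses the weights $e^{-\pi\varepsilon\beta|x|^2}$ and the factor $(\pi/\beta)^{n/2}$, which is consistent with your value of $c_n$ --- so your bookkeeping of the constant is the more reliable version, and nothing downstream is affected.
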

For our further analysis will be useful the following extension of Lemma \ref{lemma0}.
\begin{lemma}\label{lemma1} Suppose $f$ is a continuous periodic function on $\mathbb{R}^n$ and let $\{g_{m}\}$ be a sequence of uniformly bounded continuous periodic functions on $\mathbb{R}^n.$ If $g_{m}$ converges pointwise to a function $g$ defined on $\mathbb{R}^n$ and $\{\epsilon_m\}$ is a sequence of positive real numbers, then
\begin{equation}
\lim_{m\rightarrow\infty}\epsilon_m^{\frac{n}{2}}\int_{\mathbb{R}^n}e^{-\epsilon_m|x|^2}f(x)g_{m}(x)dx=\int_{\mathbb{T}^n}f(x)g(x)dx
\end{equation} provided that $\epsilon_{m}\rightarrow 0.$
\end{lemma}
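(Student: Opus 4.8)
The plan is to prove Lemma \ref{lemma1} as a direct generalization of Lemma \ref{lemma0}, using the latter as a black box together with a Gaussian-weighted version of the dominated (or bounded) convergence theorem. First I would set $F(x) := f(x)g(x)$ and $F_m(x) := f(x)g_m(x)$. Since $f$ is continuous periodic (hence bounded, say $|f|\le \|f\|_\infty$) and the $g_m$ are uniformly bounded (say $|g_m|\le M$ for all $m$), each $F_m$ is continuous, periodic, and uniformly bounded by $\|f\|_\infty M$; moreover $F_m \to F$ pointwise on $\mathbb{R}^n$, and $F$ is periodic and bounded by $\|f\|_\infty M$ (though possibly not continuous, which is fine — the right-hand side $\int_{\mathbb{T}^n} F$ still makes sense). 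The goal is to show $\epsilon_m^{n/2}\int_{\mathbb{R}^n} e^{-\epsilon_m|x|^2} F_m(x)\,dx \to \int_{\mathbb{T}^n} F(x)\,dx$.

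The key step is a triangle-inequality split. I would write
\begin{equation*}
\epsilon_m^{n/2}\!\int_{\mathbb{R}^n}\! e^{-\epsilon_m|x|^2} F_m\,dx - \int_{\mathbb{T}^n}\! F\,dx = \epsilon_m^{n/2}\!\int_{\mathbb{R}^n}\! e^{-\epsilon_m|x|^2}(F_m-F)\,dx + \Big(\epsilon_m^{n/2}\!\int_{\mathbb{R}^n}\! e^{-\epsilon_m|x|^2}F\,dx - \int_{\mathbb{T}^n}\! F\,dx\Big).
\end{equation*}
For the second bracket: $F$ is bounded and periodic, so approximating $F$ in $L^1(\mathbb{T}^n)$ by a continuous periodic function $\tilde F$ and applying Lemma \ref{lemma0} to $\tilde F$, while controlling the error $\epsilon_m^{n/2}\int e^{-\epsilon_m|x|^2}|F-\tilde F|$ uniformly in $m$ by the $L^1(\mathbb{T}^n)$-norm of $F-\tilde F$ (periodizing the Gaussian sum, $\epsilon_m^{n/2}\sum_{k\in\mathbb{Z}^n} e^{-\epsilon_m|x+k|^2}\to 1$), one gets this bracket $\to 0$. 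Alternatively, and more cleanly, one may assume without loss of generality that $F$ itself is the pointwise limit and simply remark that Lemma \ref{lemma0} extends verbatim to bounded periodic $F$ by the same periodization argument; I would state this as a one-line extension.

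For the first bracket — which I expect to be the main obstacle — I would use the periodization trick to reduce the $\mathbb{R}^n$-integral to a $\mathbb{T}^n$-integral against a probability-like weight. Writing $x = y + k$ with $y$ in a fundamental domain $Q$ of $\mathbb{T}^n$ and $k\in\mathbb{Z}^n$, and using periodicity of $F_m - F$,
\begin{equation*}
\epsilon_m^{n/2}\int_{\mathbb{R}^n} e^{-\epsilon_m|x|^2}(F_m-F)(x)\,dx = \int_{Q} \Big(\epsilon_m^{n/2}\sum_{k\in\mathbb{Z}^n} e^{-\epsilon_m|y+k|^2}\Big)(F_m-F)(y)\,dy.
\end{equation*}
The weight $w_m(y) := \epsilon_m^{n/2}\sum_{k} e^{-\epsilon_m|y+k|^2}$ is uniformly bounded (by a constant independent of $m$ and $y$, since it is a Riemann sum for $\int_{\mathbb{R}^n} e^{-|u|^2}du = \pi^{n/2}$, up to the standard comparison). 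Then $|w_m(y)(F_m-F)(y)| \le C\cdot 2\|f\|_\infty M \in L^1(Q)$ and the integrand tends to $0$ pointwise for a.e. $y$ (wherever $F_m(y)\to F(y)$, i.e. everywhere $f(y)\neq 0$ or everywhere by hypothesis), so the dominated convergence theorem on the compact set $Q$ gives the first bracket $\to 0$. Combining the two brackets finishes the proof. The only delicate point is the uniform bound on $w_m$; I would handle it by noting $w_m(y)\le \epsilon_m^{n/2}\sum_k e^{-c\epsilon_m|k|^2}$ for $|y|$ bounded and $\epsilon_m$ small, and comparing this sum to the integral $\int e^{-c\epsilon_m|u|^2}\,du = (c^{-1}\epsilon_m^{-1})^{n/2}\pi^{n/2}$, which cancels the $\epsilon_m^{n/2}$ prefactor.
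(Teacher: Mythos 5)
Your proof is correct, and it takes a genuinely different route from the paper's. The paper treats $\epsilon_s^{n/2}\int_{\mathbb{R}^n}e^{-\epsilon_s|x|^2}f g_m\,dx$ as a double sequence in $(m,s)$: for each fixed $m$ it applies Lemma \ref{lemma0} to the continuous periodic function $fg_m$ (limit in $s$), for each fixed $s$ it applies dominated convergence (limit in $m$), and then it passes to the diagonal $m=s$ by asserting that the double limit exists and is computed by the iterated limit $\lim_m\lim_s$. Your argument instead splits the error into two brackets, periodizes the Gaussian so that the $\mathbb{R}^n$-integral of the periodic function $f(g_m-g)$ becomes an integral over a fundamental domain $Q$ against the weight $w_m(y)=\epsilon_m^{n/2}\sum_{k\in\mathbb{Z}^n}e^{-\epsilon_m|y+k|^2}$, shows $w_m$ is bounded uniformly in $m$ and $y$, and then uses dominated convergence on $Q$ for the first bracket and Lemma \ref{lemma0} plus an $L^1(\mathbb{T}^n)$-approximation for the second (needed since $g$, hence $fg$, is only a bounded measurable limit, not necessarily continuous). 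What your route buys is that it is self-contained and quantitative, and it avoids the interchange-of-limits step: deducing the existence of the double limit from the two iterated limits is a Moore--Osgood type assertion requiring uniformity in one of the variables, a point the paper leaves implicit; the paper's route, in exchange, is shorter, using Lemma \ref{lemma0} and dominated convergence purely as black boxes. One caveat on your side: prefer your first treatment of the second bracket (approximation in $L^1$ plus Lemma \ref{lemma0} applied to the continuous approximant) over the ``cleaner'' alternative of extending Lemma \ref{lemma0} verbatim to bounded periodic $F$ by periodization, since the latter forces you to identify the pointwise limit of $w_m$, which is $\pi^{n/2}$ rather than $1$ and thus touches the Gaussian normalization already present in the statement of Lemma \ref{lemma0} (Stein--Weiss use $e^{-\pi\epsilon|x|^2}$); the approximation version needs only the uniform bound on $w_m$ and is insensitive to that constant.
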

\begin{proof}
By using Lemma \ref{lemma0}, we obtain for every $m\in\mathbb{N}$
$$I_{1,m}:=\lim_{s\rightarrow\infty}  \epsilon_s^{\frac{n}{2}}\int_{\mathbb{R}^n}e^{-\epsilon_s|x|^2}f(x)g_{m}(x)dx=\int_{\mathbb{T}^n}f(x)g_m(x)dx.                  $$ Now, taking into account that the sequence $\{g_m\}$ is uniformly bounded, an application of the dominated convergence  theorem gives
$$I_{2,s}:=\lim_{m\rightarrow\infty}  \epsilon_s^{\frac{n}{2}}\int_{\mathbb{R}^n}e^{-\epsilon_s|x|^2}f(x)g_{m}(x)dx=\epsilon_s^{\frac{n}{2}}\int_{\mathbb{T}^n}e^{-\epsilon_s|x|^2}f(x)g(x)dx.                  $$ 
So, the limit $\lim_{m,s\rightarrow\infty}  \epsilon_s^{\frac{n}{2}}\int_{\mathbb{R}^n}e^{-\epsilon_s|x|^2}f(x)g_{m}(x)dx                 $ of the double sequence \begin{equation}\label{doubleseq}
    \left\{\epsilon_s^{\frac{n}{2}}\int_{\mathbb{R}^n}e^{-\epsilon_s|x|^2}f(x)g_{m}(x)dx\right\}_{m,s}
\end{equation}  exists and can be computed from iterated limits in the following way:
\begin{align*}
     \lim_{m,s\rightarrow\infty}  \epsilon_s^{\frac{n}{2}}\int_{\mathbb{R}^n}e^{-\epsilon_s|x|^2}f(x)g_{m}(x)dx &= \lim_{m\rightarrow\infty} \lim_{s\rightarrow\infty}  \epsilon_s^{\frac{n}{2}}\int_{\mathbb{R}^n}e^{-\epsilon_s|x|^2}f(x)g_{m}(x)dx\\ &=\lim_{m\rightarrow\infty} I_{1,m}=\int_{\mathbb{T}^n}f(x)g(x)dx,
\end{align*}
where in the last line we have use the dominated convergence theorem. Because
$$  \left\{\epsilon_m^{\frac{n}{2}}\int_{\mathbb{R}^n}e^{-\epsilon_m|x|^2}f(x)g_{m}(x)dx\right\}_{m\in \mathbb{N}}$$ is a sub-sequence of \eqref{doubleseq},
 we obtain
$$\lim_{m\rightarrow\infty}  \epsilon_m^{\frac{n}{2}}\int_{\mathbb{R}^n}e^{-\varepsilon_m|x|^2}f(x)g_{m}(x)dx=\int_{\mathbb{T}^n}f(x)g(x)dx,                 $$ as claimed. 
\end{proof}

\begin{proof}[Proof of Theorem \ref{teorema principal11}] First, let us assume that $P$ and $Q$ are trigonometric polynomials. For every $\delta>0$ let us denote by $w_\delta(x)=e^{-\delta|x|^2}.$ So, if $\varepsilon,\alpha,\beta>0$ and $\alpha+\beta=1$ let us note that
\begin{equation}\label{eq1}
\lim_{\varepsilon \rightarrow 0}\varepsilon^{\frac{n}{2}}\int_{\mathbb{R}^n}[ T(Pw_{\varepsilon\alpha})(x)]\overline{Q(x)}w_{\varepsilon \beta}(x)dx=(\pi/\beta)^{n/2}\int_{\mathbb{T}^n}(A P)(x)\overline{Q(x)}dx.
\end{equation}
By linearity we only need to prove \eqref{eq1} when $P(x)=e^{i2\pi m x}$ and $Q(x)=e^{i2\pi k x}$ for $k$ and $m$ in $\mathbb{Z}^n.$ The right hand side of \eqref{eq1} can be computed as follows,
\begin{align*}
\int_{\mathbb{T}^n}(A P)(x)\overline{Q(x)}dx &=\int_{\mathbb{T}^n}\left(\sum_\xi e^{2\pi i\phi(x,\xi)}{a(x,\xi)\widehat{P}(\xi)} \right)\overline{Q(x)}dx\\
&=\int_{\mathbb{T}^n}\left(\sum_\xi e^{2\pi i\phi(x,\xi)}{a(x,\xi)\delta_{m,\xi}}
\right)\overline{Q(x)}dx\\
&=\int_{\mathbb{T}^n} e^{2\pi i\phi(x,m)}a(x,m)\overline{Q(x)}dx=\int_{\mathbb{T}^n} e^{2\pi i\phi(x,m)-i2\pi kx }a(x,m)dx.
\end{align*}
Now, we compute the left hand side of \eqref{eq1}. Taking under consideration that the euclidean Fourier transform of $Pw_{\alpha\varepsilon}$ is given by
\begin{equation}
[\mathscr{F}_{\mathbb{R}^n}(Pw_{\alpha\varepsilon})](\xi)=(\alpha\varepsilon)^{-\frac{n}{2}}e^{-|\xi-m|^2/\alpha\varepsilon},
\end{equation}
by the Fubini theorem we have 
\begin{align*}
&\int_{\mathbb{R}^n}    [T(Pw_{\varepsilon\alpha})(x)]    \overline{Q(x)}  w_{\varepsilon \beta}(x)dx\\ &=\int_{\mathbb{R}^n}\int_{\mathbb{R}^n}e^{2\pi i\phi( x,\xi)}a(x,\xi) (\alpha\varepsilon)^{-\frac{n}{2}}e^{-|\xi-m|^2/\alpha\varepsilon} \overline{Q(x)}w_{\varepsilon \beta}(x)d\xi dx\\
&=\int_{\mathbb{R}^n}\left(\int_{\mathbb{R}^n}e^{2\pi i\phi( x,\xi)-i2\pi kx} e^{-\pi\varepsilon\beta|x|^2} a(x,\xi) dx \right)(\alpha\varepsilon)^{-\frac{n}{2}}e^{-|\xi-m|^2/\alpha\varepsilon} d\xi\\
&=\int_{\mathbb{R}^n}\left(\int_{\mathbb{R}^n}e^{2\pi i\phi( x,(\alpha\varepsilon)^{\frac{1}{2}}\eta+m)-i2\pi kx} a( x,(\alpha\varepsilon)^{\frac{1}{2}} \eta+m) e^{-\pi\varepsilon\beta|x|^2}dx \right) e^{-|\eta|^2} d\eta.\\
\end{align*}
So, we have
\begin{align*}
&\lim_{\varepsilon\rightarrow 0}\varepsilon^{n/2} \int_{\mathbb{R}^n}   [ T(Pw_{\varepsilon\alpha})](x)   \overline{Q(x)}  w_{\varepsilon \beta}(x)dx\\
&=\lim_{\varepsilon\rightarrow 0}\beta^{-\frac{n}{2}}(\beta\varepsilon)^{n/2}\int_{\mathbb{R}^n}   [ T(Pw_{\varepsilon\alpha})](x)    \overline{Q(x)}  w_{\varepsilon \beta}(x)dx\\
&=\lim_{\varepsilon\rightarrow 0}\beta^{-\frac{n}{2}}(\beta\varepsilon)^{n/2} \int_{\mathbb{R}^n}\int_{\mathbb{R}^n}e^{2\pi i\phi( x,(\alpha\varepsilon)^{\frac{1}{2}}\eta+m)-i2\pi kx}a( x,(\alpha\varepsilon)^{\frac{1}{2}} \eta+m) e^{-\pi\varepsilon\beta|x|^2}dx \,\cdot\, e^{-|\eta|^2} d\eta.
\end{align*}
By Lemma \ref{lemma1}, we have
\begin{align*}
\lim_{\varepsilon\rightarrow 0} (\beta\varepsilon)^{n/2} &\int_{\mathbb{R}^n}e^{2\pi i\phi( x,(\alpha/\beta)^{\frac{1}{2}}(\beta\varepsilon)^{\frac{1}{2}}\eta+m)}e^{-i2\pi kx}a( x,(\alpha\varepsilon)^{\frac{1}{2}} \eta+m) e^{-\pi\varepsilon\beta|x|^2}dx\\
&=\int_{\mathbb{T}^n}e^{2\pi i\phi( x,m)-i2\pi kx}a(x,m)dx.
\end{align*}
Taking into account that $\int_{\mathbb{R}^n}e^{-|\eta|^2}d\eta=\pi^{n/2},$ and that $a$ is a  continuous bounded function, by the dominated convergence theorem we have
\begin{align*}
\lim_{\varepsilon\rightarrow 0}\varepsilon^{n/2} \int_{\mathbb{R}^n}    [T(Pw_{\varepsilon\alpha})  ](x) \overline{Q(x)}  w_{\varepsilon \beta}(x)dx =(\pi/\beta)^{n/2}\int_{\mathbb{T}^n}e^{2\pi i\phi( x,m)-i2\pi kx}a(x,m)dx.
\end{align*}
If we assume that $T$ is a bounded linear operator on $L^p(\mathbb{R}^n),$ then the restriction of $A$ to trigonometric polynomials is a bounded operators on $L^p(\mathbb{T}^n).$ In fact, if $\alpha=\frac{1}{p}$ and $\beta=\frac{1}{p'}$ we obtain
\begin{align*}
&\Vert AP\Vert_{L^p(\mathbb{T}^n)} =\sup_{\Vert Q \Vert_{L^{p'}(\mathbb{T}^n)}=1}\left|\int_{\mathbb{T}^n}(A P)(x)\overline{Q(x)}dx \right|\\
&=\sup_{\Vert Q \Vert_{L^{p'}(\mathbb{T}^n)}=1}\lim_{\varepsilon\rightarrow 0} \varepsilon^{n/2} (\frac{1}{\pi p'})^{n/2}\left| \int_{\mathbb{R}^n}    [T(Pw_{\varepsilon\alpha})](x)   \overline{Q(x)}  w_{\varepsilon \beta}(x)dx  \right|\\
&\leq\sup_{\Vert Q \Vert_{L^{p'}(\mathbb{T}^n)}=1}\lim_{\varepsilon\rightarrow 0} \varepsilon^{n/2}  (\frac{1}{\pi p'})^{n/2}\Vert T \Vert_{\mathscr{B}(L^p)}\Vert Pw_{\varepsilon/p}\Vert_{L^p(\mathbb{R}^n)}\Vert Qw_{\varepsilon/p'}\Vert_{L^{p'}(\mathbb{T}^n)}\\
&\leq\sup_{\Vert Q \Vert_{L^{p'}(\mathbb{T}^n)}=1} \Vert T \Vert_{\mathscr{B}(L^p)}\lim_{\varepsilon\rightarrow 0}  (\frac{1}{\pi p'})^{n/2} \left(    \varepsilon^{n/2}\int_{\mathbb{R}^n}|P(x)|^pe^{-\pi\varepsilon|x|^2}dx\right)^{\frac{1}{p}} \\ 
&\hspace{8cm} \times\left(  \varepsilon^{n/2}  \int_{\mathbb{R}^n}|Q(x)|^{p'}e^{-\pi\varepsilon|x|^2}dx   \right)^{\frac{1}{p'}}\\
&\leq\sup_{\Vert Q \Vert_{L^{p'}(\mathbb{T}^n)}=1} \Vert T \Vert_{\mathscr{B}(L^p)} (\frac{1}{\pi p'})^{n/2} \left(  \int_{\mathbb{T}^n}|P(x)|^pdx\right)^{\frac{1}{p}}  \left(   \int_{\mathbb{T}^n}|Q(x)|^{p'}dx   \right)^{\frac{1}{p'}}\\
&= \Vert T \Vert_{\mathscr{B}(L^p)} (\frac{1}{\pi p'})^{n/2} \Vert P\Vert_{L^p(\mathbb{T}^n)}.
\end{align*}
Since the restriction of $A$ to trigonometric polynomials is a bounded operator on $L^p(\mathbb{T}^n)$ this restriction admits a unique bounded extension on $L^p(\mathbb{T}^n).$ The proof is complete.
\end{proof}
\begin{remark}
From the proof of \label{teorema principal1}, let us observe that the constant $C_p$  in \eqref{cp} can be estimated by $C_p\leq \Vert T \Vert_{\mathscr{B}(L^p)} (\frac{1}{\pi p'})^{n/2}. $
\end{remark}
\begin{remark}
As it can be observed from the proof of Theorem  \ref{teorema principal11}, the assumptions on the boundedness of the second argument in the phase function and the symbol $a$ can be imposed only on $\xi\in \mathbb{Z}^n.$ \end{remark}

\begin{theorem}\label{extension}
Let us choose a phase function $1$-periodic in $x$  and let $1<p<\infty.$ If $a:\mathbb{T}^n\times \mathbb{Z}^n\rightarrow \mathbb{C}$ is a function satisfying
\begin{equation}
\sup_{z\in\mathbb{T}^n}\Vert \partial_z^\alpha a(z,\cdot)\Vert_{\mathfrak{S}_{\phi,p}(\mathbb{T}^n)}<\infty,\,\,\,|\alpha|\leq \left[\frac{n}{p}\right]+1,
\end{equation} then the Fourier integral operator 
\begin{equation}
Af(x):=A_{\phi,a}f(x)=\sum_{\xi\in\mathbb{Z}^n}e^{2\pi i\phi(x,\xi)}a(x,\xi)(\mathscr{F}_{{\mathbb{T}^n}}f)(\xi),
\end{equation}  extends to a bounded operator  $A:L^p(\mathbb{T}^n)\rightarrow L^p(\mathbb{T}^n).$
\end{theorem}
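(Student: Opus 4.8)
The plan is to reduce the boundedness of the full operator $A_{\phi,a}$ with $x$-dependent symbol to the scalar-symbol case, exploiting the hypothesis that $a(z,\cdot)\in\mathfrak{S}_{\phi,p}(\mathbb{T}^n)$ uniformly in $z$ together with enough $z$-regularity. The natural mechanism is a Fourier expansion of $a$ in the $x$-variable, writing
\begin{equation*}
a(x,\xi)=\sum_{\eta\in\mathbb{Z}^n}\widehat{a}(\eta,\xi)e^{i2\pi \eta x},\qquad \widehat{a}(\eta,\xi)=\int_{\mathbb{T}^n}a(z,\xi)e^{-i2\pi\eta z}\,dz,
\end{equation*}
so that, formally,
\begin{equation*}
A_{\phi,a}f(x)=\sum_{\eta\in\mathbb{Z}^n}e^{i2\pi\eta x}\sum_{\xi\in\mathbb{Z}^n}e^{2\pi i\phi(x,\xi)}\widehat{a}(\eta,\xi)(\mathscr{F}_{\mathbb{T}^n}f)(\xi)=\sum_{\eta\in\mathbb{Z}^n}e^{i2\pi\eta x}\,A_{\phi,\widehat{a}(\eta,\cdot)}f(x).
\end{equation*}
Since multiplication by $e^{i2\pi\eta x}$ is an isometry on $L^p(\mathbb{T}^n)$, we get $\Vert A_{\phi,a}\Vert_{\mathscr{B}(L^p)}\leq\sum_{\eta}\Vert \widehat{a}(\eta,\cdot)\Vert_{\mathfrak{S}_{\phi,p}(\mathbb{T}^n)}$, and the whole matter becomes proving that this sum is finite.

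Next I would control $\Vert\widehat{a}(\eta,\cdot)\Vert_{\mathfrak{S}_{\phi,p}(\mathbb{T}^n)}$ by the hypotheses. Because the $\mathfrak{S}_{\phi,p}$-norm is defined through $L^p$-operator norms and integration in $z$ is an averaging operation, one expects a Minkowski-type inequality $\Vert\widehat{a}(\eta,\cdot)\Vert_{\mathfrak{S}_{\phi,p}(\mathbb{T}^n)}\leq\int_{\mathbb{T}^n}\Vert a(z,\cdot)\Vert_{\mathfrak{S}_{\phi,p}(\mathbb{T}^n)}\,dz$ — but this by itself only gives a uniform bound on the Fourier coefficients, not summability. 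To gain decay in $\eta$, integrate by parts in $z$: for any multi-index $\alpha$,
\begin{equation*}
(2\pi i\eta)^{\alpha}\widehat{a}(\eta,\xi)=\int_{\mathbb{T}^n}\partial_z^\alpha a(z,\xi)e^{-i2\pi\eta z}\,dz,
\end{equation*}
and applying the Minkowski inequality for the $\mathfrak{S}_{\phi,p}$-norm (which is legitimate since it is a genuine norm on operators and the integrand is a norm-valued integrable function) yields $|\eta^\alpha|\,\Vert\widehat{a}(\eta,\cdot)\Vert_{\mathfrak{S}_{\phi,p}(\mathbb{T}^n)}\lesssim\sup_{z}\Vert\partial_z^\alpha a(z,\cdot)\Vert_{\mathfrak{S}_{\phi,p}(\mathbb{T}^n)}<\infty$. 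Summing over $|\alpha|\leq N$ gives $\Vert\widehat{a}(\eta,\cdot)\Vert_{\mathfrak{S}_{\phi,p}(\mathbb{T}^n)}\lesssim\langle\eta\rangle^{-N}$, and the series $\sum_\eta\langle\eta\rangle^{-N}$ converges once $N>n$. Choosing $N=\left[\frac{n}{p}\right]+1$ is, however, slightly too weak for $p>1$ in general; the right move is to interpolate: use the uniform bound $\Vert\widehat{a}(\eta,\cdot)\Vert_{\mathfrak{S}_{\phi,p}}\lesssim 1$ together with the decay coming from $\left[\frac{n}{p}\right]+1$ derivatives and a Hausdorff--Young / Sobolev embedding argument in $\eta$, exactly as in the classical proof that a Sobolev function of order $>n/p$ in a suitable sense has summable Fourier coefficients against the relevant $\ell^{p'}$-type space. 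Concretely, one should split the $\eta$-sum dyadically and use that $\sum_{|\eta|\sim 2^j}\Vert\widehat a(\eta,\cdot)\Vert_{\mathfrak S_{\phi,p}}$ is bounded by $2^{j n}$ times the sup of the norms on that shell, while the derivative bound forces that sup to be $\lesssim 2^{-j([n/p]+1)}$ after an $\ell^2$-type orthogonality estimate in $z$; since $[n/p]+1>n/p$ one can close the geometric series.

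The main obstacle, and the step deserving the most care, is precisely this passage from pointwise-in-$z$ control of $\Vert\partial_z^\alpha a(z,\cdot)\Vert_{\mathfrak{S}_{\phi,p}}$ to $\ell^1$-summability of the Fourier coefficients with the economical regularity threshold $\left[\frac{n}{p}\right]+1$: a naive derivative count needs more than $n$ derivatives, so one must genuinely exploit an orthogonality (Plancherel-in-$z$) argument or interpolate between the $L^\infty$-in-$z$ bound with $\left[\frac{n}{p}\right]+1$ derivatives and the trivial $L^2$-in-$z$ bound, mimicking the standard Sobolev-space proof that $H^{s}$ embeds into the Wiener algebra for $s>n/2$ but adapted to the mixed scaling associated with $L^p$. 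I would also need to verify that all the formal manipulations — interchanging the $\eta$-sum with $A_{\phi,\cdot}$, and the Minkowski inequality for the operator-norm-valued integral — are justified on the dense subspace of trigonometric polynomials, where every sum is finite, and then pass to the closure, exactly as in the proof of Theorem~\ref{teorema principal11}. Once summability of $\Vert\widehat a(\eta,\cdot)\Vert_{\mathfrak S_{\phi,p}}$ is in hand, the triangle inequality delivers the bounded extension of $A_{\phi,a}$ on $L^p(\mathbb{T}^n)$ and completes the proof.
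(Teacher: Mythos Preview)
Your Fourier-expansion approach has a genuine gap at the summability step that cannot be repaired with the stated hypothesis. After integration by parts you obtain $\|\widehat{a}(\eta,\cdot)\|_{\mathfrak{S}_{\phi,p}}\lesssim\langle\eta\rangle^{-N}$ with $N=[n/p]+1$, but $\sum_\eta\langle\eta\rangle^{-N}<\infty$ requires $N>n$, which fails for every $p>1$. Your proposed fix via an ``$\ell^2$-orthogonality estimate in $z$'' or Hausdorff--Young does not work: the norm $\|\cdot\|_{\mathfrak{S}_{\phi,p}}$ is an operator norm on a Banach space, and Banach-space-valued Fourier series enjoy neither a Plancherel identity nor a Hausdorff--Young inequality in general (these require the target space to have nontrivial Fourier type, which $\mathscr{B}(L^p)$ lacks). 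Even if one granted a fictitious Plancherel bound, your dyadic Cauchy--Schwarz sketch would yield summability only under $N>n/2$, still insufficient when $p>2$. There is no ``mixed $L^p$-scaling'' available in the $z$-variable: the hypothesis is a supremum over $z$, and summing the operator norms over $\eta$ is exactly asking that $z\mapsto a(z,\cdot)$ lie in the $\mathfrak{S}_{\phi,p}$-valued Wiener algebra, a condition strictly stronger than what $[n/p]+1$ derivatives give.

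The paper avoids Fourier expansion entirely. It freezes $z$ and sets $A_z f(x):=\sum_\xi e^{2\pi i\phi(x,\xi)}a(z,\xi)\widehat{f}(\xi)$, so that $Af(x)=A_xf(x)$. For each fixed $x$, one applies the \emph{scalar} Sobolev embedding $W^{[n/p]+1,p}(\mathbb{T}^n_z)\hookrightarrow L^\infty(\mathbb{T}^n_z)$ to the function $z\mapsto A_zf(x)$:
\[
|Af(x)|^p\le\sup_{z}|A_zf(x)|^p\lesssim\sum_{|\beta|\le[n/p]+1}\int_{\mathbb{T}^n}|\partial_z^\beta A_zf(x)|^p\,dz.
\]
Integrating in $x$ and applying Fubini, the inner $dx$-integral becomes $\|\partial_z^\beta A_z f\|_{L^p(\mathbb{T}^n_x)}^p\le\|\partial_z^\beta a(z,\cdot)\|_{\mathfrak{S}_{\phi,p}}^p\|f\|_{L^p}^p$, uniformly bounded in $z$ by hypothesis. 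The crucial point your argument misses is that the exponent $p$ in the Sobolev embedding \emph{matches} the exponent of the target $L^p(\mathbb{T}^n_x)$-norm, and Fubini links the two integrals; this is precisely why $[n/p]+1$ derivatives suffice rather than $[n/2]+1$ or $n+1$.
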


\begin{proof}
 For every $z\in\mathbb{T}^n,$ we define the operator family given by
\begin{equation}
A_{z}f(x):=\sum_{\xi\in\mathbb{Z}^n}e^{2\pi i\phi(x,\xi)}a(z,\xi)(\mathscr{F}_{{\mathbb{T}^n}}f)(\xi).
\end{equation} Taking into account the identity $A_xf(x)=Af(x),$  by the Sobolev embedding Theorem we have
\begin{equation}
    \sup_{z\in\mathbb{T}^n}|A_zf(x)|\lesssim \sum_{|\beta|\leq [\frac{n}{p}]+1}\Vert\partial_z^\beta A_zf(x)\Vert_{L^p(\mathbb{T}^n_z)}=\sum_{|\beta|\leq [\frac{n}{p}]+1}\left(\int_{\mathbb{T}^n} \vert\partial_z^\beta A_zf(x)\vert^p \, dz\right)^\frac{1}{p}.
\end{equation}
Consequently, we have
\begin{align*}
\Vert Af \Vert^p_{L^p(\mathbb{T}^n)}&=\int_{\mathbb{T}^n}|A_xf(x)|^pdx\leq \int_{\mathbb{T}^n}\sup_{z\in\mathbb{T}^n}|A_zf(x)|^pdx\\
&\lesssim \sum_{|\beta|\leq [\frac{n}{p}]+1} \int_{\mathbb{T}^n}\int_{\mathbb{T}^n} \vert\partial_z^\beta A_zf(x)\vert^p \, dzdx\\&=\sum_{|\beta|\leq [\frac{n}{p}]+1} \int_{\mathbb{T}^n}\int_{\mathbb{T}^n} \vert\partial_z^\beta A_zf(x)\vert^p \, dxdz \\
&= \sum_{|\beta|\leq [\frac{n}{p}]+1}\int_{\mathbb{T}^n}\Vert\partial_z^\beta A_zf\Vert^p_{L^p(\mathbb{T}^n)}dz\\
&\leq \sum_{|\beta|\leq [\frac{n}{p}]+1}\sup_{z\in\mathbb{T}^n}\Vert\partial_z^\beta A_z\Vert_{\mathscr{B}(L^p(\mathbb{T}^n))} \Vert f\Vert^p_{L^p(\mathbb{T}^n)}  \\
&= \sum_{|\beta|\leq [\frac{n}{p}]+1} \sup_{z\in\mathbb{T}^n}\Vert \partial_z^\beta a(z,\cdot)\Vert^p_{\mathfrak{S}_{\phi,p}(\mathbb{T}^n)} \Vert f\Vert^p_{L^p(\mathbb{T}^n)} . 
\end{align*} So we have $ \Vert Af \Vert_{L^p(\mathbb{T}^n)}\leq C\Vert f \Vert_{L^p(\mathbb{T}^n)}  $ where
$$ C^p= \sum_{|\beta|\leq [\frac{n}{p}]+1} \sup_{z\in\mathbb{T}^n}\Vert \partial_z^\beta a(z,\cdot)\Vert^p_{\mathfrak{S}_{\phi,p}(\mathbb{T}^n)} <\infty. $$ The proof is complete.
\end{proof}

\section{Boundedness of periodic Fourier integral operators}\label{Sec4}
In this section we present sufficient conditions for the $L^p$-boundedness of FSOs. Instead of the conditions presented in the previous section now we consider symbol criteria for the boundedness of these operators. Our main theorem in this section is the next Theorem \ref{T3}.

\begin{theorem}\label{T3} 
Let us assume that $\phi:\mathbb{T}^n\times \mathbb{R}^n\rightarrow\mathbb{R}$ is a   real-valued phase function positively homogeneous of order $1$ in $\xi\neq 0$. Let us assume that $\partial_{x}^{\gamma'}\partial_{\xi}^\gamma\phi\in S^0_{0,0}(\mathbb{T}^n\times (\mathbb{R}^n\setminus \{0\}))$ when $|\gamma|=|\gamma'|=1,$ that
\begin{equation}
    |\textnormal{det}(\partial_{y}\partial_{\xi}\phi(y,\xi))|\geq C>0,\,\,\,|\partial^\alpha_{y}\phi(y,\xi)|\leq C_{\alpha}|\xi|,\,\,\xi\neq 0,
\end{equation}
\begin{equation}
    \langle \nabla_{\xi}\phi(y,\xi)\rangle\asymp  1,\,\,\,\langle \nabla_{y}\phi(y,\xi)\rangle\asymp \langle \xi\rangle, 
\end{equation} 
and the symbol inequalities
\begin{equation}
|\partial_x^\beta\Delta_\xi^\alpha a(x,\xi)|\leq C_{\alpha,\beta}\langle \xi\rangle^{\mu-|\alpha|},\,\,\mu\leq\mu_p:=-(n-1)\left|\frac{1}{p}-\frac{1}{2}\right|,\,\,|\beta|\leq \left[\frac{n}{p}\right]+1,
\end{equation} hold true. Then, the periodic Fourier integral operator in \eqref{cp} extends to a bounded linear operator $A:=A_{\phi,a}:L^p(\mathbb{T}^n)\rightarrow L^p(\mathbb{T}^n)$  for all  $1<p<\infty.$
\end{theorem}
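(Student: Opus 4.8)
The plan is to combine the two reduction steps of Section~\ref{Sec3} — Theorem~\ref{extension} (freezing the $x$--variable) and Theorem~\ref{teorema principal11} (comparison with a Euclidean FIO) — with the Seeger--Sogge--Stein theorem on $\mathbb{R}^n$.

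\emph{Step 1 (freezing the $x$--variable).} I would first invoke Theorem~\ref{extension}: it suffices to bound $\sup_{z\in\mathbb{T}^n}\Vert\partial_z^{\beta}a(z,\cdot)\Vert_{\mathfrak{S}_{\phi,p}(\mathbb{T}^n)}$ for all $|\beta|\le\left[\frac{n}{p}\right]+1$. Differentiating the symbol inequality in $x$ preserves its form, so for each such $\beta$ the sequence $b:=\partial_z^{\beta}a(z,\cdot):\mathbb{Z}^n\to\mathbb{C}$ still satisfies $|\Delta_\xi^{\alpha}b(\xi)|\le C_{\alpha,\beta}\langle\xi\rangle^{\mu-|\alpha|}$ with $\mu\le\mu_p$, uniformly in $z$. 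Thus the theorem reduces to the claim: \emph{if $b:\mathbb{Z}^n\to\mathbb{C}$ satisfies $|\Delta_\xi^{\alpha}b(\xi)|\le C_{\alpha}\langle\xi\rangle^{\mu-|\alpha|}$ for all $|\alpha|\le N$ (a fixed integer depending only on $n$) with $\mu\le\mu_p$, then $A_{\phi,b}$ is bounded on $L^p(\mathbb{T}^n)$ with norm $\lesssim\max_{|\alpha|\le N}C_{\alpha}$.}

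\emph{Step 2 (passage to $\mathbb{R}^n$).} By the converse part of Corollary~\ref{IClases}, $b$ is the restriction to $\mathbb{Z}^n$ of a function $\tilde b\in C^{\infty}(\mathbb{R}^n)$ satisfying $|\partial_\xi^{\alpha}\tilde b(\xi)|\lesssim_{\alpha}\langle\xi\rangle^{\mu-|\alpha|}$ for $|\alpha|\le N$; in particular $\tilde b$ is continuous and bounded, since $\mu\le\mu_p\le 0$. Because $\phi$ is, by hypothesis, a continuous real-valued function on $\mathbb{T}^n\times\mathbb{R}^n$ (and $1$-periodic in $x$), Theorem~\ref{teorema principal11} reduces the claim to the global $L^p(\mathbb{R}^n)$-boundedness of the Euclidean Fourier integral operator $T_{\phi,\tilde b}$, with operator norm controlled by finitely many of the constants of $\tilde b$.

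\emph{Step 3 (global $L^p(\mathbb{R}^n)$-boundedness).} Here the amplitude $\tilde b=\tilde b(\xi)$ is a classical symbol of order $\mu\le\mu_p=-(n-1)|\tfrac1p-\tfrac12|$ and the phase $\phi$ is positively homogeneous of degree $1$ in $\xi$, non-degenerate ($|\det\partial_y\partial_\xi\phi|\ge C>0$), has $\partial_x^{\gamma'}\partial_\xi^{\gamma}\phi\in S^0_{0,0}$ for $|\gamma|=|\gamma'|=1$, grows at most linearly ($|\partial_y^{\alpha}\phi|\le C_{\alpha}|\xi|$), and satisfies $\langle\nabla_\xi\phi\rangle\asymp1$, $\langle\nabla_y\phi\rangle\asymp\langle\xi\rangle$. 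The $(L^p_{\mathrm{comp}},L^p_{\mathrm{loc}})$-boundedness of $T_{\phi,\tilde b}$ is exactly the Seeger--Sogge--Stein theorem \cite{SSS91} (second dyadic decomposition: the pieces $|\xi|\sim2^j$ are split into $\sim2^{j(n-1)/2}$ caps of angular width $2^{-j/2}$, on each of which $\phi$ is an $x$-linear term plus a bounded symbol thanks to the homogeneity and the $S^0_{0,0}$ bound on $\partial_x\partial_\xi\phi$; then Cotlar--Stein almost orthogonality at $p=2$, the $(H^1,L^1)$ endpoint at $\mu=-\tfrac12(n-1)$, and interpolation). The remaining global hypotheses on $\phi$ — in particular $\langle\nabla_\xi\phi\rangle\asymp1$, which forces rapid off-diagonal decay of the Schwartz kernel, and $\langle\nabla_y\phi\rangle\asymp\langle\xi\rangle$ together with $|\det\partial_y\partial_\xi\phi|\ge C$, which keep the Jacobians of the relevant $\xi$-changes of variables uniformly bounded — promote the local estimate to a genuine bound on all of $L^p(\mathbb{R}^n)$, in the spirit of the Ruzhansky--Sugimoto global calculus \cite{RuzSugi}.

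Chaining Steps~1--3 yields the theorem for every $1<p<\infty$. The crux is Step~3: the Seeger--Sogge--Stein estimate is a priori only local, whereas Theorem~\ref{teorema principal11} requires a global $L^p(\mathbb{R}^n)\to L^p(\mathbb{R}^n)$ bound as its input, so the non-degeneracy and the growth/equivalence conditions on $\phi$ must be used in an essential way to control the operator at infinity; once this is in place, the toroidal conclusion follows mechanically via Theorem~\ref{teorema principal11} and Theorem~\ref{extension}.
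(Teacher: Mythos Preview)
Your approach is essentially the same as the paper's: freeze the $x$--variable via Theorem~\ref{extension}, extend the frozen symbol to $\mathbb{R}^n$ via Corollary~\ref{IClases}, and then transfer an $L^p(\mathbb{R}^n)$ bound for the Euclidean FIO to $L^p(\mathbb{T}^n)$ via Theorem~\ref{teorema principal11}. The order of the first two reductions is swapped, but that is cosmetic.

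The one substantive difference is in your Step~3. The paper does not invoke Seeger--Sogge--Stein at all; instead it quotes the global $L^p(\mathbb{R}^n)$ theorem of Coriasco--Ruzhansky \cite{CoRu1,CoRu2} (stated in the paper as Theorem~\ref{CorRuzhansky}), whose hypotheses are exactly the phase conditions assumed here together with an SG-type symbol estimate that your $\tilde b(\xi)$ trivially satisfies (take $m_1=m_2=0$). This gives the needed global bound, and the norm control \eqref{estimativodenorma}, as a black box. Your route---local SSS plus a globalization argument ``in the spirit of'' \cite{RuzSugi}---is precisely what Coriasco--Ruzhansky carry out in detail, so you are in effect re-deriving Theorem~\ref{CorRuzhansky} rather than citing it. There is no gap in the strategy, but your sketch of the globalization is informal, and since the paper's whole point in Section~\ref{Sec4} is to reduce to an existing Euclidean result, it is cleaner (and safer) to cite \cite{CoRu1,CoRu2} directly.
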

Theorem \ref{T3} is a version on the torus $\mathbb{T}^n$ (with conditions of limited regularity in the spatial variables) of one proved by M. Ruzhansky and S. Coriasco \cite{CoRu1,CoRu2} for Fourier integral operators on $\mathbb{R}^n$. The corresponding assertion is the following.
\begin{theorem}\label{CorRuzhansky}
Let us assume that $\phi:\mathbb{R}^n\times \mathbb{R}^n\rightarrow\mathbb{R}$ is a   real-valued phase function positively homogeneous of order $1$ in $\xi\neq 0$. Let us assume that 
\begin{equation}
    |\textnormal{det}(\partial_{y}\partial_{\xi}\phi(y,\xi))|\geq C>0,\,\,\,|\partial^\alpha_{y}\phi(y,\xi)|\leq C_{\alpha}\langle y\rangle^{1-|\alpha|}|\xi|,\,\,\xi\neq 0,
\end{equation}
\begin{equation}
    \langle \nabla_{\xi}\phi(y,\xi)\rangle\asymp  \langle y\rangle,\,\,\,\langle \nabla_{y}\phi(y,\xi)\rangle\asymp \langle \xi\rangle, 
\end{equation} that
$\partial_{x}^{\gamma'}\partial_{\xi}^\gamma\phi\in S^0_{0,0}(\mathbb{R}^n\times( \mathbb{R}^n\setminus\{0\} ))$ when $|\gamma|=|\gamma'|=1,$ and 
\begin{equation}\label{CRcostantes}
|\partial_x^\beta\partial_\xi^\alpha a(x,y,\xi)|\leq C_{\alpha,\beta}\langle x\rangle^{m_1-|\beta|}\langle y\rangle^{m_2-|\gamma|}\langle \xi\rangle^{\mu-|\alpha|},\,\,\mu\leq\mu_p:=-(n-1)\left|\frac{1}{p}-\frac{1}{2}\right|,
\end{equation} holds true for all $\alpha,\beta,\gamma\in\mathbb{N}_0^n$ provided that $m_1+m_2=m\leq m_p=-n\left|\frac{1}{p}-\frac{1}{2}\right|$. Then, the  Fourier integral operator $T:=T_{\phi,a}:L^p(\mathbb{R}^n)\rightarrow L^p(\mathbb{R}^n)$ extends to a bounded operator for all  $1<p<\infty.$
\end{theorem}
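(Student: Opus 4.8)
The plan is to split the two sources of $L^p$-loss present in the statement: the Seeger--Sogge--Stein loss $\mu_p=-(n-1)\left|\frac1p-\frac12\right|$, produced by the curvature of a homogeneous non-degenerate phase, and the loss $m_p=-n\left|\frac1p-\frac12\right|$, a ``scattering'' loss caused by the absence of decay of the amplitude in the space variables (it is the order at which pseudo-differential operators with no $x$-decay become $L^p$-bounded, by Fefferman's theorem; see \cite{LW,Hor1}). Working in the SG-calculus of Fourier integral operators of Coriasco--Ruzhansky \cite{CoRu1,CoRu2} --- with its composition formula and its Egorov theorem --- one rewrites $T_{\phi,a}$ as a composition of SG-pseudo-differential operators that absorb the space weights $\langle x\rangle^{m_1}$ (acting on the output) and $\langle y\rangle^{m_2}$ (acting on the input), of \emph{total} space-order $m_1+m_2\le m_p$ and $\xi$-order $0$, with a Fourier integral operator $S=T_{\phi_0,b}$ whose amplitude $b$ carries only the $\xi$-order $\mu\le\mu_p$ and whose phase $\phi_0$ still satisfies $|\det\partial_y\partial_\xi\phi_0|\ge c>0$, $\partial_x^{\gamma'}\partial_\xi^{\gamma}\phi_0\in S^0_{0,0}$ for $|\gamma|=|\gamma'|=1$, and $\langle\nabla_\xi\phi_0\rangle\asymp\langle y\rangle$, $\langle\nabla_y\phi_0\rangle\asymp\langle\xi\rangle$. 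The SG-version of the Fefferman--H\"ormander $L^p$-theorem makes the pseudo-differential factors bounded on $L^p(\mathbb{R}^n)$ precisely when $m_1+m_2\le m_p$, so the whole problem reduces to the $L^p(\mathbb{R}^n)$-boundedness of $S$.

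To estimate $S$, I would perform a Littlewood--Paley decomposition in the frequency variable, $b=\sum_{j\ge 0}b_j$ with $b_j$ supported in $\langle\xi\rangle\asymp 2^{j}$ for $j\ge 1$ (the piece $b_0$, supported in $\langle\xi\rangle\lesssim 1$, is an SG-pseudo-differential operator of $\xi$-order $-\infty$ and is harmless), together with a dyadic decomposition in the space variable, $b_j=\sum_{l\ge0}b_{j,l}$ with $b_{j,l}$ supported in $\langle y\rangle\asymp 2^{l}$; the relation $\langle\nabla_\xi\phi_0\rangle\asymp\langle y\rangle$ confines the $x$-support of the kernel of $S_{j,l}:=T_{\phi_0,b_{j,l}}$ to a ball of radius $\asymp 2^{l}$ as well. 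The crucial point is that the rescaling $y\mapsto 2^{l}y$, $\xi\mapsto 2^{j}\eta$ turns $S_{j,l}$ into a \emph{standard, compactly supported} Fourier integral operator: homogeneity of $\phi_0$ in $\xi$ and the bounds on $\partial_x^{\gamma'}\partial_\xi^{\gamma}\phi_0$ force the rescaled phase $2^{-l}\phi_0(2^{l}y,2^{j}\eta)$ to lie, uniformly in $(j,l)$, in a single bounded family obeying the Seeger--Sogge--Stein non-degeneracy hypothesis on the relevant compact set, while the rescaled amplitude, after one factors out $2^{j\mu}$ and the weight $\langle x\rangle^{m_1}\langle y\rangle^{m_2}\asymp 2^{l(m_1+m_2)}$ (and the Jacobians of the rescalings), is a standard symbol of order $\mu$. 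By Seeger--Sogge--Stein \cite{SSS91} this gives $\Vert S_{j,l}\Vert_{\mathscr{B}(L^p(\mathbb{R}^n))}\lesssim 2^{j\mu}\,2^{l(m_1+m_2)}$, uniformly in $(j,l)$.

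Summing these bounds directly only handles the strict inequalities $\mu<\mu_p$ and $m_1+m_2<m_p$; at the endpoints $\mu=\mu_p$ and $m_1+m_2=m_p$ the two geometric series diverge, and overcoming this is the heart of the matter. The remedy is to establish \emph{almost orthogonality} of the pieces $S_{j,l}$: in $L^2$ they are almost orthogonal because their frequency supports are $2^{j}$-separated and, by $\langle\nabla_y\phi_0\rangle\asymp\langle\xi\rangle$, the directions of oscillation in $y$ are separated accordingly; a Cotlar--Stein estimate in $L^2$, combined with the $H^1$--$L^1$ endpoint bounds built into the Seeger--Sogge--Stein construction and interpolated, recovers the $L^p$-boundedness of $S$ at the endpoints uniformly. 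I expect the genuine difficulty to be the uniformity of all these reductions in the pair $(j,l)$ --- that the rescaled phases stay in one fixed class and that the rescaling Jacobians produce \emph{exactly} the factors $2^{lm_1}$, $2^{lm_2}$ and $2^{j\mu}$ --- which is precisely where the SG-hypotheses $\langle\nabla_\xi\phi\rangle\asymp\langle y\rangle$, $\langle\nabla_y\phi\rangle\asymp\langle\xi\rangle$ and $|\partial_y^\alpha\phi|\le C_\alpha\langle y\rangle^{1-|\alpha|}|\xi|$ enter. Once $S$ is controlled, reassembling the composition and collecting operator norms yields the claim; a technically more elementary variant, which avoids the factorization altogether, is to run the same double dyadic decomposition and rescaling directly on $T_{\phi,a}$, absorbing the $x$- and $y$-weights piece by piece, at the cost of carrying both losses through one and the same almost-orthogonality argument.
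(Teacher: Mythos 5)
First, a point of reference: the paper does not prove Theorem \ref{CorRuzhansky} at all --- it is quoted from Coriasco and Ruzhansky \cite{CoRu1,CoRu2} and used as a black box --- so your proposal has to be measured against their argument. Your opening reduction misplaces where the spatial threshold $m_p=-n\left|\frac1p-\frac12\right|$ comes from. If one could really factor $T_{\phi,a}$ into SG pseudo-differential factors carrying the weights $\langle x\rangle^{m_1}$, $\langle y\rangle^{m_2}$ (of $\xi$-order $0$) composed with an FIO $S$ of order $(0,\mu)$, then those factors would be bounded on $L^p(\mathbb{R}^n)$ as soon as $m_1\le 0$ and $m_2\le 0$: a symbol of order $(m_i,0)$ with $m_i\le 0$ lies in $S^0_{1,0}$ uniformly in $x$, so no condition of the form $m_1+m_2\le m_p$ governs their $L^p$-boundedness, contrary to what you assert. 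Your scheme would therefore prove the statement with $m\le 0$ in place of $m\le m_p$, which cannot be correct in general (otherwise the hypothesis $m\le m_p$ would be superfluous; it is the natural global threshold in \cite{CoRu2}). Consistently with this confusion, the weights $2^{l(m_1+m_2)}$ reappear in your bound for $S_{j,l}$ even though $S$ was normalized to have spatial order $0$. The threshold $m_p$ has to be produced by the FIO part itself, through the interplay of the spatial dyadic localization with the rescaling, not by the boundedness of pseudo-differential factors.

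Second, the claimed block estimate $\Vert S_{j,l}\Vert_{\mathscr{B}(L^p)}\lesssim 2^{j\mu}2^{l(m_1+m_2)}$ is not what rescaling plus Seeger--Sogge--Stein yields. Since $\phi$ is homogeneous of degree one in $\xi$, the substitution $\xi=2^{j}\eta$ does not remove the frequency scale: it turns the phase into $2^{j}\phi_0(x,\eta)$, an oscillatory integral with large parameter $2^{j}$, and for such a single frequency-localized piece the sharp estimate costs an extra factor $2^{j(n-1)\left|\frac1p-\frac12\right|}=2^{-j\mu_p}$ (already for the flat wave phase). Hence at the critical order $\mu=\mu_p$ the blocks are only $O(1)$ uniformly in $j$, and the $j$-sum cannot be closed by summing operator norms; note also that your own stated bounds have strictly negative exponents at the endpoints for $p\neq 2$, so your claim that the geometric series diverge there is internally inconsistent --- the real problem is that the bound itself is wrong. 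The proposed patch (Cotlar--Stein in $L^2$ plus ``the $H^1$--$L^1$ endpoint built into SSS'' plus interpolation, now with SG weights and phase) is exactly the whole content of the theorem and is asserted rather than executed. The workable route, and essentially the one of \cite{CoRu2}, decomposes \emph{only} in the spatial variable into dyadic shells (the relation $\langle\nabla_\xi\phi\rangle\asymp\langle x\rangle$ couples the $x$- and $y$-localizations), rescales each piece so that it satisfies the hypotheses of the local theorem of \cite{SSS91} with uniform constants, applies that theorem as a black box at the critical frequency order $\mu_p$ on each shell, and then sums over the shells; the Jacobian mismatch between the spatial rescaling and the induced frequency normalization is precisely what produces the factor $2^{l n\left|\frac1p-\frac12\right|}$ that the hypothesis $m\le m_p$ must absorb, while the $p=2$ case is handled by the global $L^2$ theory \cite{RuzSugi01}. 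Your double decomposition in space and frequency forces you to re-prove the SSS endpoint globally, and that step is left open.
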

\begin{remark}
A look to the proof of Theorem \ref{CorRuzhansky} allows us to conclude that the operator norm $\Vert T \Vert_{\mathscr{B}(L^p)}$ satisfies estimates of the type
\begin{equation}\label{estimativodenorma}
\Vert T \Vert_{\mathscr{B}(L^p)}\leq C \sup_{|\alpha|,|\beta|\leq \ell}C_{\alpha,\beta}
\end{equation} where the constants $C_{\alpha,\beta}$ where defined in \eqref{CRcostantes}, and $\ell$ is a positive, large enough integer.
\end{remark}

\begin{proof}[Proof of Theorem \ref{T3}] 
Now, we can use the machinery developed in the previous section. If $a$ is a symbol satisfying $|\Delta_\xi^\alpha a(\xi)|\leq C_{\alpha,\beta}\langle \xi\rangle^{\mu-\alpha},$  $\mu\leq\mu_p:=-(n-1)|\frac{1}{p}-\frac{1}{2}|,$ by Corollary \ref{IClases} there exists $\tilde{a}:\mathbb{R}^n\rightarrow\mathbb{C}$ such that $|\partial_\xi^\alpha a(\xi)|\leq C_{\alpha,\beta}\langle \xi\rangle^{\mu-\alpha},\,\,\mu\leq\mu_p:=-(n-1)\left|\frac{1}{p}-\frac{1}{2}\right|$ and $a=\tilde{a}|_{\mathbb{Z}^n}.$ Taking this into account the Fourier integral operator $T=T_{\phi,a}$ associated with $\phi$ and the  symbol $\tilde{a}$ is bounded on $L^p(\mathbb{R}^n)$ (in fact $T$ satisfies the hypothesis in Theorem \ref{CorRuzhansky}). So, by Theorem \ref{teorema principal1} $A$ extends to a bounded operator on $L^p(\mathbb{T}^n)$ and from \eqref{estimativodenorma} we have
\begin{equation}
\Vert A\Vert_{\mathscr{B}(L^p)}\leq C_p \Vert T\Vert_{\mathscr{B}(L^p)}\leq C_p C \sup_{|\alpha|\leq \ell}{C_{\alpha}}
\end{equation} where the constants $C_\alpha$ are defined by the condition
\begin{equation}
|\partial_\xi^\alpha \tilde{a}(\xi)|\leq C_{\alpha}\langle \xi\rangle^{\mu-|\alpha|},\,\,\mu\leq\mu_p:=-(n-1)\left|\frac{1}{p}-\frac{1}{2}\right|.
\end{equation} 
So, we finish the proof for this case. Now, if the symbol $a(x,\xi)$ depends on $x,$  let us define for every $z\in\mathbb{T}^n,$  the operator  given by
\begin{equation}
A_{z}f(x):=\sum_{\xi\in\mathbb{Z}^n}e^{2\pi i\phi(x,\xi)}a(z,\xi)(\mathscr{F}_{{\mathbb{T}^n}}f)(\xi).
\end{equation} Since
\begin{equation}
|\partial_\xi^\alpha (\partial_z^\beta{a})(z,\xi)|\leq C_{\alpha}\langle \xi\rangle^{\mu-|\alpha|},\,\,\mu\leq\mu_p:=-(n-1)\left|\frac{1}{p}-\frac{1}{2}\right|,\,|\beta|\leq \left[\frac{n}{p}\right]+1,
\end{equation} we have
\begin{equation}
\sup_{z\in\mathbb{T}^n}\Vert \partial_z^\beta{a} \Vert_{\mathfrak{S}_{\phi,p}}\lesssim \sup_{|\alpha|,|\beta|\leq \max\{\ell,[n/p]+1\}}C_{\alpha,\beta}<\infty.
\end{equation}
To conclude, we now only have to apply Theorem
 \ref{extension}.
\end{proof}

\section{Dispersive estimates for periodic Fourier integral operators}\label{Sec5}
In this section we prove some dispersive $L^2$-estimates for a parametrized family of periodic Fourier integral operators of the form,
\begin{equation}\label{cptt'}
A_{t}f(x):=\sum_{\xi\in\mathbb{Z}^n}e^{2\pi ix\xi+2\pi it\phi(t,x,\xi)}a(t,x,\xi)(\mathscr{F}_{{\mathbb{T}^n}}f)(\xi),\,0<t_0\leq t<\infty.
\end{equation}
Our starting point is the following result due to M. Ruzhansky and J. Wirth \cite{RWirth}.

\begin{theorem}[Ruzhansky-Wirth]\label{RuzWirthTheo} Let us consider the parametrized family of FIOs
\begin{equation}\label{cptt''''}
T_{t}f(x):=\int_{\mathbb{R}^n}e^{i2\pi x\xi+2\pi it\phi(t,x,\xi)}a(t,x,\xi)(\mathscr{F}_{{\mathbb{R}^n}}f)(\xi)d\xi,\,0<t_0\leq t<\infty.
\end{equation} Let us assume that $\phi(t,x,\xi)$ is real-valued, $1$-homogeneous in $\xi$ and satisfies
\begin{equation}
|\det(I+t\partial_x\partial_\xi\phi(t,x,\xi))|\geq C_0>0,\,\,|\partial_x^\beta\partial_\xi^\alpha \phi(t,x,\xi)|\leq C_{\alpha,\beta}t^{-|\beta|},\,\,t\geq t_0>0,
\end{equation} for all $x\in \mathbb{R}^n,$ $\xi\neq 0,$ and for  $1\leq |\alpha|,|\beta|\leq 2n+2.$ Let us assume that $a(t,x,\xi)$ is supported in $t|\xi|\geq C$ for some constant $C>0$ and that
\begin{equation}
|\partial_x^\beta\partial_\xi^\alpha a(t,x,\xi)|\leq C_{\alpha,\beta}t^{-|\beta|},\,\,t\geq t_0>0,\,\,|\alpha|,|\beta|\leq 2n+2.
\end{equation} Then, the family $T_{t}f(x),\,0<t_0\leq t<\infty,$ is uniformly bounded on $L^2(\mathbb{R}^n).$ Moreover
\begin{equation}
\Vert T_t f\Vert_{L^2(\mathbb{R}^n)}\leq\left( C\sup_{|\alpha|,|\beta|\leq 2n+2}C_{\alpha,\beta}\right)\cdot  \Vert  f\Vert_{L^2(\mathbb{R}^n)}.
\end{equation}
\end{theorem}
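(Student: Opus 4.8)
The plan is to prove the uniform $L^2(\mathbb{R}^n)$-bound by the $TT^*$ method combined with an almost-orthogonal (Cotlar--Stein) decomposition, keeping careful track of every power of $t$. Writing the total phase $\Phi_t(x,\xi)=x\cdot\xi+t\phi(t,x,\xi)$, the first task is to record two structural consequences of the hypotheses that hold uniformly in $t\geq t_0$. Since $\partial_x\partial_\xi\Phi_t=I+t\partial_x\partial_\xi\phi$ has $|\det(I+t\partial_x\partial_\xi\phi)|\geq C_0$, and since the bound $|\partial_x^\beta\partial_\xi^\alpha\phi|\leq C_{\alpha,\beta}t^{-|\beta|}$ together with the $1$-homogeneity of $\phi$ gives $\|D_x(\nabla_\xi\Phi_t)\|\lesssim 1$ and $\|D_\xi(\nabla_x\Phi_t)\|\lesssim 1$, the maps $x\mapsto\nabla_\xi\Phi_t(x,\xi)=x+t\nabla_\xi\phi(t,x,\xi)$ and $\xi\mapsto\nabla_x\Phi_t(x,\xi)=\xi+t\nabla_x\phi(t,x,\xi)$ are, for each fixed $\xi\neq 0$ resp. $x$, local diffeomorphisms with uniformly bounded Jacobian and determinant bounded below; by Hadamard's global inverse function theorem they are global diffeomorphisms of $\mathbb{R}^n$, and since $\|D g\|\lesssim 1$ and $|\det Dg|\geq C_0$ force $\|(Dg)^{-1}\|\lesssim 1$, both maps are \emph{uniformly bi-Lipschitz} in $t\geq t_0$. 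On the support $t|\xi|\geq C$ the homogeneity of $\phi$ and the estimates for $a$ make all the derivative bounds for $t\phi$ and $a$ that we shall need hold with constants controlled by $\sup_{|\alpha|,|\beta|\leq 2n+2}C_{\alpha,\beta}$, independently of $t$.

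First I would reduce the problem to the boundedness of $T_tT_t^*$, whose kernel is
\begin{equation}
L_t(x,z)=\int_{\mathbb{R}^n}e^{2\pi i(\Phi_t(x,\xi)-\Phi_t(z,\xi))}\,a(t,x,\xi)\overline{a(t,z,\xi)}\,d\xi.
\end{equation}
Because $a$ carries no decay in $\xi$, this integral is only oscillatory and the diagonal $x=z$ is singular, so a direct Schur estimate is impossible. To circumvent this I would fix a partition of unity $1=\sum_{\nu\in\mathbb{Z}^n}\chi(\xi-\nu)$ subordinate to unit cubes, split $a=\sum_\nu a_\nu$ with $a_\nu=a\,\chi(\cdot-\nu)$, and write $T_t=\sum_\nu T_t^\nu$, where each $T_t^\nu$ has frequency support in a unit cube so that the corresponding oscillatory integrals converge absolutely.

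The core step is almost orthogonality. The composition $(T_t^\mu)(T_t^\nu)^*$ has kernel $\int e^{2\pi i(\Phi_t(x,\xi)-\Phi_t(z,\xi))}a_\mu(t,x,\xi)\overline{a_\nu(t,z,\xi)}\,d\xi$, whose integrand is supported where $\xi$ lies in both cubes and hence vanishes unless $|\mu-\nu|\lesssim 1$; when $\mu,\nu$ are neighbours, the identity $\nabla_\xi(\Phi_t(x,\xi)-\Phi_t(z,\xi))=\nabla_\xi\Phi_t(x,\xi)-\nabla_\xi\Phi_t(z,\xi)$ and the uniform $x$-bi-Lipschitz property give $|\nabla_\xi(\Phi_t(x,\xi)-\Phi_t(z,\xi))|\gtrsim|x-z|$, so repeated integration by parts in $\xi$ produces the spatial decay $\langle x-z\rangle^{-2N}$ and Schur's lemma bounds the operator uniformly. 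The companion composition $(T_t^\mu)^*(T_t^\nu)$ carries instead an $x$-integration with phase $\Phi_t(x,\xi)-\Phi_t(x,\eta)$, $\xi$ near $\nu$ and $\eta$ near $\mu$; since $\nabla_x\Phi_t(x,\cdot)$ is uniformly bi-Lipschitz in the frequency we get $|\nabla_x\Phi_t(x,\xi)-\nabla_x\Phi_t(x,\eta)|\gtrsim|\xi-\eta|\gtrsim|\mu-\nu|$, and integration by parts in $x$ yields the decay $\langle\mu-\nu\rangle^{-2N}$. Taking $2N>n$ and noting that only $\lesssim 2n+2$ derivatives of phase and symbol are consumed, the Cotlar--Stein lemma applies and gives $\|T_t\|_{\mathscr{B}(L^2(\mathbb{R}^n))}\lesssim\sup_{|\alpha|,|\beta|\leq 2n+2}C_{\alpha,\beta}$, uniformly in $t\geq t_0$, which is the desired estimate.

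The main obstacle I anticipate is precisely the uniform-in-$t$ bookkeeping inside these almost-orthogonality estimates: one must check that the bi-Lipschitz constants of $\nabla_\xi\Phi_t$ and $\nabla_x\Phi_t$, together with every seminorm generated by differentiating $t\phi$ and $a$ up to order $2n+2$, neither blow up nor degenerate as $t\to\infty$. This is where the scaling built into the hypotheses must be used in concert: each $x$-derivative falling on the phase or the amplitude costs a factor $t^{-1}$ that is exactly compensated by the factor $t$ multiplying $\phi$, so that, for $|\gamma|\geq 1$, quantities such as $t\,\partial_x^\gamma\phi$ are $O(t^{1-|\gamma|})=O(1)$ for $t\geq t_0$; the cut-off $t|\xi|\geq C$ and the $1$-homogeneity of $\phi$ handle the behaviour near $\xi=0$. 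Verifying this compensation rigorously, and confirming that $2n+2$ derivatives indeed suffice both for the non-stationary phase gains and for the convergence of the $\xi$- and $x$-integrals in dimension $n$, is the technical heart of the argument.
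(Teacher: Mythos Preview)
The paper does not prove this theorem. It is quoted verbatim as a result of Ruzhansky and Wirth (Theorem~5.1, from \cite{RWirth}) and used as a black box to deduce the periodic analogue that follows it. There is therefore no in-paper proof to compare your proposal against.

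That said, your $TT^*$/Cotlar--Stein scheme with a unit-cube frequency decomposition is the standard route to such uniform $L^2$-bounds and is essentially the method of the cited source. Your handling of the $t$-bookkeeping is on target: the crucial point is that only \emph{differences} of the phase enter the kernels of $T_t^\mu(T_t^\nu)^*$ and $(T_t^\mu)^*T_t^\nu$, so that pure $\xi$-derivatives (resp.\ pure $x$-derivatives) of $t\phi$---for which no direct hypothesis is given---can be traded, via the mean value theorem, for the mixed derivatives $\partial_x^\beta\partial_\xi^\alpha\phi$ with $|\alpha|,|\beta|\geq 1$, which do satisfy $t\cdot t^{-|\beta|}\lesssim 1$. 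Two places deserve an extra line of care. First, the global bi-Lipschitz claim via Hadamard--L\'evy needs the uniform bound $\|(I+t\partial_x\partial_\xi\phi)^{-1}\|\lesssim 1$, which does follow from $|\det|\geq C_0$ together with $\|I+t\partial_x\partial_\xi\phi\|\lesssim 1$; you should state this explicitly. Second, for $\xi\mapsto\nabla_x\Phi_t(x,\xi)$ and for the integration by parts in $\xi$, the singularity of $\partial_\xi^\alpha\phi$ at $\xi=0$ (from $1$-homogeneity) has to be faced when the cube $\nu=0$ is involved or when a segment $[\eta,\xi]$ passes near the origin; the support condition $t|\xi|\geq C$ and the fact that only differences appear make this manageable, but it is the one spot where your sketch is genuinely incomplete rather than merely terse.
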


To conclude, we prove the following result for  parametrized families of periodic Fourier integral operators.

\begin{theorem}Let us consider the parametrized family of periodic Fourier integral operators 
\begin{equation}\label{cptt'''''X}
A_{t}f(x):=\sum_{\xi\in\mathbb{Z}^n}e^{i2\pi x\xi+2\pi it\phi(t,x,\xi)}a(t,x,\xi)(\mathscr{F}_{{\mathbb{T}^n}}f)(\xi),\,0<t_0\leq t<\infty.
\end{equation} Let us assume that $\phi:[t_0,\infty)\times \mathbb{T}^n\times \mathbb{R}^n\rightarrow\mathbb{R}$ is a   real-valued phase function,  homogeneous of order $1$ in $\xi\neq 0,$ and satisfies
\begin{equation}
|\det(I+t\partial_x\partial_\xi\phi(t,x,\xi))|\geq C_0>0,\,\,|\partial_x^\beta\partial_\xi^\alpha \phi(t,x,\xi)|\leq C_{\alpha,\beta}t^{-|\beta|},\,\,t\geq t_0>0,
\end{equation} for all $x\in \mathbb{T}^n,$ $\xi\neq 0,$ and for  $1\leq |\beta|,|\alpha|\leq 2n+2.$ Let us assume that $a:[t_0,\infty)\times \mathbb{T}^n\times \mathbb{Z}^n\rightarrow\mathbb{C}$ is supported in $t|\xi|\geq C$ for some constant $C>0$ and that
\begin{equation}
|\partial_x^\beta\Delta_\xi^\alpha a(t,x,\xi)|\leq C_{\alpha,\beta}t^{-|\beta|},\,\,t\geq t_0>0,\,\,|\alpha|,|\beta|\leq 2n+2.
\end{equation} Then, the family $A_{t},\,0<t_0\leq t<\infty,$ is uniformly bounded on $L^2(\mathbb{T}^n).$ Moreover
\begin{equation}
\Vert A_t f\Vert_{L^2(\mathbb{T}^n)}\leq C\sup_{|\alpha|,|\beta|\leq 2n+2}C_{\alpha,\beta}\cdot  \Vert  f\Vert_{L^2(\mathbb{T}^n)}.
\end{equation}
\end{theorem}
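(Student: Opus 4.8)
The plan is to deduce this statement from the Euclidean Ruzhansky--Wirth Theorem \ref{RuzWirthTheo} in the same way Theorem \ref{T3} was deduced from Theorem \ref{CorRuzhansky}: lift the discrete symbol to $\mathbb{R}^n$ via Corollary \ref{IClases}, apply the known $L^2(\mathbb{R}^n)$-estimate to the resulting Euclidean family, and transfer back to $\mathbb{T}^n$ by means of (the method of proof of) Theorem \ref{teorema principal11}. The one new feature is that the whole chain must be run uniformly in $t\in[t_0,\infty)$, so at every step one checks that the constants obtained do not depend on $t$.

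First I would fix $t\geq t_0$ and apply Corollary \ref{IClases} with $m=0$, $\rho=\delta=0$ and $N_1=N_2=2n+2$ to the toroidal symbol $a(t,\cdot,\cdot)$. Since the constant in Corollary \ref{IClases} is purely combinatorial, this produces an extension $\widetilde a_t:\mathbb{T}^n\times\mathbb{R}^n\to\mathbb{C}$ with $\widetilde a_t|_{\mathbb{T}^n\times\mathbb{Z}^n}=a(t,\cdot,\cdot)$ and $|\partial_x^\beta\partial_\xi^\alpha\widetilde a_t(x,\xi)|\leq C'_{\alpha,\beta}\,t^{-|\beta|}$ for $|\alpha|,|\beta|\leq 2n+2$, with $C'_{\alpha,\beta}$ independent of $t$. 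Because Theorem \ref{RuzWirthTheo} requires the symbol to be supported in $t|\xi|\geq C$, I would then set $b_t:=\psi\,\widetilde a_t$ with $\psi\in C^\infty(\mathbb{R}^n)$ a fixed cutoff vanishing for $|\xi|\leq 1/4$ and equal to $1$ for $|\xi|\geq 1/2$. Since $a(t,\cdot,0)=0$ and $\psi\equiv 1$ at every nonzero lattice point, $b_t|_{\mathbb{T}^n\times\mathbb{Z}^n}$ is still $a(t,\cdot,\cdot)$; the estimates above persist with $t$-independent constants; and $\textnormal{supp}\,b_t\subseteq\{|\xi|\geq 1/4\}\subseteq\{t|\xi|\geq t_0/4\}$.

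Next I would form the Euclidean family $T_t$ with phase $\Phi_t(x,\xi)=x\cdot\xi+t\phi(t,x,\xi)$ and symbol $b_t$ and verify the hypotheses of Theorem \ref{RuzWirthTheo} with constants uniform in $t$: the determinant lower bound and the phase estimates $|\partial_x^\beta\partial_\xi^\alpha\phi|\leq C_{\alpha,\beta}t^{-|\beta|}$ are assumed for all $\xi\neq 0$ (hence on $\textnormal{supp}\,b_t$), the support condition holds with constant $t_0/4$, and the symbol bounds were obtained in the previous step. Theorem \ref{RuzWirthTheo} then yields $\sup_{t\geq t_0}\|T_t\|_{\mathscr{B}(L^2(\mathbb{R}^n))}\leq C\sup_{|\alpha|,|\beta|\leq 2n+2}C_{\alpha,\beta}<\infty$. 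Applying Theorem \ref{teorema principal11} with $p=2$ to the pair $(\Phi_t,b_t)$ — whose hypotheses hold, $b_t$ being continuous and bounded — gives $\|A_{\Phi_t,b_t}\|_{\mathscr{B}(L^2(\mathbb{T}^n))}\leq C_2\|T_t\|_{\mathscr{B}(L^2(\mathbb{R}^n))}$, and $A_{\Phi_t,b_t}=A_t$ on $\mathbb{T}^n$ because $b_t$ restricts to $a(t,\cdot,\cdot)$ on $\mathbb{Z}^n$. Chaining the two estimates gives the claimed uniform bound with a constant controlled by $\sup_{|\alpha|,|\beta|\leq 2n+2}C_{\alpha,\beta}$.

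The step that I expect to need the most care is the transference, for a reason absent in Section \ref{Sec4}: the phase $\Phi_t(x,\xi)=x\cdot\xi+t\phi(t,x,\xi)$ is $1$-periodic in $x$ only when $\xi\in\mathbb{Z}^n$ (the term $x\cdot\xi$ destroys periodicity otherwise), precisely the range present in $A_t$ but not in $T_t$. One must therefore re-run the Gaussian-localization argument behind Theorem \ref{teorema principal11}: when $\widehat{Pw_{\varepsilon\alpha}}$ concentrates its mass near a lattice point $m\in\mathbb{Z}^n$ the exponential $e^{2\pi i\Phi_t(x,m)}$ is periodic, and the residual non-periodic factor $e^{2\pi i(\xi-m)\cdot x}$ produced by the Gaussian has to be absorbed by an oscillatory strengthening of Lemma \ref{lemma1} (equivalently, by expanding the periodic part $e^{2\pi i\Phi_t}b_t$ in a Fourier series in $x$ and observing that only the zero mode survives the iterated limit). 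Once this is carried out, with $t$-independent constants throughout, the argument closes. Let me also note that since $a(t,\cdot,\cdot)$ is given only on $\mathbb{T}^n\times\mathbb{Z}^n$, the extension of Step 1 is genuinely required before the Euclidean theorem can be invoked.
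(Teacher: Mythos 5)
Your proposal is correct and follows essentially the same route as the paper's own proof: extend the toroidal symbol $a(t,\cdot,\cdot)$ to $\mathbb{T}^n\times\mathbb{R}^n$ via Corollary \ref{IClases}, apply the Ruzhansky--Wirth estimate (Theorem \ref{RuzWirthTheo}) to the resulting Euclidean family with constants uniform in $t$, and transfer back to $L^2(\mathbb{T}^n)$ by the method of Theorem \ref{teorema principal11}. Your two additional precautions --- the cutoff $\psi$ ensuring the extended symbol still satisfies the support condition $t|\xi|\geq C$, and the remark that the full phase $x\cdot\xi+t\phi(t,x,\xi)$ is $1$-periodic in $x$ only at lattice frequencies, so the Gaussian-localization step needs the oscillatory refinement of Lemma \ref{lemma1} you describe --- are refinements of details the paper passes over silently, not a different method.
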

\begin{proof}
If $a(t,x,\xi)$ is a periodic symbol satisfying \begin{equation}
|\partial_x^\alpha\Delta_\xi^\beta a(t,\xi)|\leq C_{\alpha,\beta}t^{-|\alpha|},\,\,t\geq t_0>0,\,\,|\alpha|,|\beta|\leq 2n+2,
\end{equation}  by Corollary \ref{IClases} there exists $\tilde{a}:[t_0,\infty)\times\mathbb{T}^n\times \mathbb{R}^n\rightarrow\mathbb{C}$ such that \begin{equation}
|\partial_x^\alpha\partial_\xi^\beta a(t,x,\xi)|\leq C_{\alpha,\beta}t^{-|\alpha|},\,\,t\geq t_0>0,\,\,|\alpha|,|\beta|\leq 2n+2,
\end{equation}  and $a(t,\cdot,\cdot)=\tilde{a}(t,\cdot,\cdot)|_{\mathbb{T}^n\times\mathbb{Z}^n}.$ Taking this into account, the family of Fourier integral operators $T_t=T_{\phi,a(t,\cdot)}$ associated with the phase function $\phi$ and  the  symbol $\tilde{a}(t,x,\cdot),$ is uniformly bounded on $L^2(\mathbb{R}^n)$ (in fact, the family $T_t$ satisfies the hypothesis in Theorem \ref{RuzWirthTheo}). So, by Theorem \ref{teorema principal1}, the parametrized family $A_t$ extends to a uniformly bounded one  on $L^2(\mathbb{T}^n)$ and from \eqref{estimativodenorma} we have
\begin{equation}
\Vert A_t\Vert_{\mathscr{B}(L^2)}\leq C_2 \Vert T_t\Vert_{\mathscr{B}(L^2)}\leq C_2 C \sup_{|\alpha|\leq 2n+2}{C_{\alpha}}
\end{equation} So, we finish the proof.
\end{proof}

\vspace{.0cm}
\noindent{\textbf{Acknowledgements.}} The authors are indebted to an anonymous referee  for very asserted suggestions which have improved the analysis and presentation of this manuscript. We thank Jan Rozendaal for comments.

\bibliographystyle{amsplain}

\end{document}